\newcommand{\eg}{e.g.,\xspace}
\newcommand{\ie}{i.e.,\xspace}
\newcommand{\prof}{{\,}\widehat{\,}{\,}}
\newcommand{\ess}{\ensuremath{\mathbf{S}}}   
\newcommand{\Z}{\ensuremath{\mathbf{Z}}}   
\newcommand{\Q}{\ensuremath{\mathbf{Q}}} 
\newcommand{\T}{\ensuremath{\mathbf{T}}}    
\newcommand{\A}{\ensuremath{\mathcal{A}}}    
\newcommand{\bx}{\ensuremath{\mathbf{x}}}    
\newcommand{\I}{\ensuremath{\mathcal{I}}}    
\newcommand{\cC}{\ensuremath{\mathcal{C}}}    
\newcommand{\smsh}{\ensuremath{\wedge}}  
\newcommand{\we}{\overset{\sim}{\rightarrow}}
\newcommand{\Lim}[1][{}]{\operatornamewithlimits{lim}_{\overleftarrow{#1}}}
\newcommand{\holim}[1][{}]{\operatornamewithlimits{holim}_{\overleftarrow{#1}
}}
\newcommand{\colim}[1]{\operatornamewithlimits{lim}_{\overrightarrow{#1}}}
\newcommand{\hocolim}[1][{}]{\operatornamewithlimits{holim}_{\overrightarrow{
#1}}}
\newcommand{\CDholim}[1][{}]{\underset{\overleftarrow{#1}}{\operatorname{holi
m}}}
\newcommand{\fib}{\twoheadrightarrow}
\newcommand{\ab}{\ensuremath{\mathcal{A}b}}
\theoremstyle{plain}
\newtheorem{theo}[subsection]{Theorem}
\newtheorem{prop}[subsection]{Proposition}
\newtheorem{lemma}[subsection]{Lemma}
\newtheorem{cor}[subsection]{Corollary}
\theoremstyle{definition}
\newtheorem{Def}[subsection]{Definition}
\newtheorem{ex}[subsection]{Example}
\newtheorem{remark}[subsection]{Remark}
\theoremstyle{remark}
\newcommand{\hofib}{\ensuremath{\operatorname{hofib}}}
\newcommand{\ifi}{\operatorname{ifib}}
\newcommand{\Tor}{\ensuremath{\operatorname{Tor}}}
\begin{document}
\title{Integral excision for K-theory}
\author{Bj{\o}rn Ian Dundas and Harald {\O}yen Kittang}
\address{Department of Mathematics, University of Bergen, Norway}
\email{dundas@math.uib.no}
\address{Mintra AS,
Norway}
\email{harald.kittang@gmail.com}

\maketitle

\numberwithin{equation}{section} 

\begin{abstract}
If $\mathcal A$ is a homotopy cartesian square of
ring spectra satisfying connectivity hypotheses, then the cube
induced by Goodwillie's integral
cyclotomic trace $K(\mathcal A)\to TC(\mathcal A)$
is homotopy cartesian.  In other words, the
homotopy fiber
of the cyclotomic trace satisfies excision.

The method of proof gives as a spin-off new proofs of some old results, as well as some new results, about periodic cyclic homology, and - more relevantly for our current application - the $\T$-Tate spectrum of topological Hochschild homology, where $\T$ is the circle group.
\end{abstract}

\section{Introduction}
\label{sec:intro}

Algebraic K-theory is an important invariant that can be approached
from widely different angles.  There are structural theorems cutting
calculations into smaller, and hopefully more manageable pieces; and
there are approximations by theories that are more open themselves to
calculation.  The aim of this paper is to explain how these two
approaches can be combined in a certain situation.

Algebraic K-theory satisfies the Mayer-Vietoris property for Zariski open
imbeddings of schemes \cite{MR1106918}.  For closed imbeddings this
generally fails, which is bad, for instance if you want to analyze a
singularity where open covers are of little help.  

On the other hand, it is sometimes possible to approximate algebraic
K-theory through the cyclotomic trace $trc\colon
K\to TC$  to topological cyclic homology.  Topological cyclic homology
lacks some of the structural properties of algebraic K-theory, but one
can hope to calculate $TC$ in a given situation.  

This paper proves that the difference between K-theory and
topological cyclic homology, that is, the homotopy fiber of the
cyclotomic trace $\operatorname{hofib}_{trc}$, has the Mayer-Vietoris
property for closed imbeddings.  

The importance of this is that K-theory is wedged in a fiber sequence
$$\operatorname{hofib}_{trc}\to K\to TC$$
where the fiber is structurally accessible and the base functor is
accessible through calculations in stable homotopy theory.
More concretely, this means that, if you have a closed cover,
then algebraic K-theory can be recovered from topological cyclic
homology and the hyper homology of algebraic K-theory with respect to
the closed cover.

When trying to generalize algebraic geometry to ring spectra certain
obstacles are met.  Most successful approaches have focused on
connective  (\ie the negative homotopy groups vanish) ring spectra, and have translated the crucial geometric
invariants through the path component functor $\pi_0$.  Also, the
translation between rings and schemes requires some care.  In
particular, a pushout of affine schemes is in general not an affine
scheme.  When one of the maps involved is a closed embedding things
work out \cite{MR2182775}, and that is the context we are concerned with in this paper.

\begin{theo}\label{thm:main}
  Let 
\begin{equation*}
   \A=\left\{\begin{CD}
    A^0 @>>> A^1 \\
    @VVV  @VV{f^1}V\\
    A^2 @>>{f^2}> A^{12}
  \end{CD}\right\}
\end{equation*}
be a homotopy cartesian square of connective ring spectra and
$0$-connected maps.  
%
Then the resulting cube 
$$trc_{\mathcal A}\colon K(\mathcal A)\to TC(\mathcal A)$$ is homotopy cartesian.
\end{theo}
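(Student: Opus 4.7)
The cube $trc_{\mathcal A}$ is homotopy cartesian if and only if the square of vertical homotopy fibers $\hofib(trc_{A^i})$, $i\in\{0,1,2,12\}$, is itself homotopy cartesian, so the task is Mayer--Vietoris excision for $\hofib(trc)$ on homotopy cartesian squares of connective ring spectra with $0$-connected maps. A subtlety is that this class of squares is strictly larger than the nilpotent-extension setting: the $\pi_0$-kernels of $0$-connected maps need not be nilpotent (classical Milnor squares such as $\mathbf{Z}\to\mathbf{Z}_p$ and $\mathbf{Z}_{(p)}\to\mathbf{F}_p$ lie exactly here), so the integral McCarthy--Dundas relative theorems for a single map do not apply in one stroke.

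The natural strategy is to break the trace into accessible pieces using the fiber sequence
\begin{equation*}
TC(A) \to TC^-(A) \to THH(A)^{t\T}
\end{equation*}
relating topological cyclic homology, the $\T$-homotopy fixed points of $THH$, and the $\T$-Tate spectrum of $THH$. Combined with the canonical factorization $K \to TC \to TC^-$ of the trace, this produces, for any connective ring spectrum $A$, a fiber sequence
\begin{equation*}
\hofib(K(A)\to TC(A)) \to \hofib(K(A) \to TC^-(A)) \to \Omega\, THH(A)^{t\T},
\end{equation*}
reducing the required excision to two statements: (i) excision for the fiber of the negative cyclotomic trace $K\to TC^-$, and (ii) bi-relative cartesianness for the integral $\T$-Tate spectrum $THH^{t\T}$.

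Ingredient (i) should be reachable by Goodwillie--Dundas--McCarthy-type technology. The fiber of $K\to TC^-$ is much better behaved than that of $trc$, and its bi-relative behaviour on homotopy cartesian squares of $0$-connected maps ultimately reduces to a $THH$-level computation, where the $0$-connectedness of the maps in $\mathcal A$ furnishes the connectivity gain needed to push the standard bi-relative estimates for the cyclic bar construction and its $\T$-homotopy fixed points through.

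Ingredient (ii) is the crux, and is the reason the paper develops substantial new material on the $\T$-Tate spectrum of $THH$, as the abstract advertises. Rationally, $THH^{t\T}$ is closely tied to periodic cyclic homology and can be handled directly; $p$-adically, Segal-conjecture-style input and Tate spectral sequence methods give enough control. An \emph{integral} cartesianness statement for $THH^{t\T}$, by contrast, requires genuinely new input: the Tate construction does not in general respect cartesian squares, and the integral gluing of the rational and $p$-adic pictures must be performed by hand. This is where the principal obstacle lies, and it is the step in which the bulk of the technical work has to be done. Once it is in hand, assembling the two ingredients via the displayed fiber sequence closes the argument.
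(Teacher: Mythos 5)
Your plan differs from the paper's at the very first step, and the pieces you propose to prove are not the ones the paper actually proves; moreover one of your two ingredients has a genuine gap.

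The paper does not split $TC$ by a fiber sequence $TC\to TC^-\to THH^{t\T}$. Instead it applies the arithmetic square to $\ifi\hofib_{trc}(\A)$ directly, reducing the theorem to showing that $\ifi\hofib_{trc}(\A)_{(0)}$ and $\ifi\hofib_{trc}(\A)\prof_{(0)}$ both vanish. The second vanishing is not re-proved: it is the main result of the earlier Dundas--Kittang paper, which in turn leans on Geisser--Hesselholt and, rationally, on Corti\~nas' theorem that $K(\A)_{(0)}\to\bigl(THH(\A)_{(0)}\bigr)^{h\T}$ is cartesian. Given Corti\~nas, the remaining content is Lemma \ref{lem:main}: $TC(\A)_{(0)}\to\bigl(THH(\A)_{(0)}\bigr)^{h\T}$ is cartesian, which Lemma \ref{lemma:main} converts into the same statement after $\prof_{(0)}$. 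Your proposal replaces all of this recycling of prior results with a new decomposition of $TC$ and asks to prove the pieces from scratch.

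The concrete gap is in your ingredient (i). You assert that $\hofib(K\to TC^-)$ is ``much better behaved'' than $\hofib(K\to TC)$ and that Goodwillie--Dundas--McCarthy technology plus $0$-connectivity pushes bi-relative estimates through $\T$-homotopy fixed points. Neither claim holds up. The DGM theorems control $\hofib(K\to TC)$, not $\hofib(K\to TC^-)$: the two differ by $\Omega THH^{t\T}$, so excision for $\hofib(K\to TC^-)$ is \emph{equivalent}, given your ingredient (ii), to the theorem itself --- the two ingredients are not independent. Furthermore, $(-)^{h\T}$ does not preserve connectivity, so the ``connectivity gain'' from the $0$-connected maps in $\A$ does not propagate to $THH(\A)^{h\T}$ or $TC^-(\A)$ by elementary estimates; this is exactly why the paper must invoke Corti\~nas and Geisser--Hesselholt rather than direct bounds.

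Finally, your ingredient (ii) aims at cartesianness of the \emph{integral} $THH(\A)^{t\T}$. That is not the statement the paper proves or needs. What is proved (Proposition \ref{prop:Tatevanish}) is cartesianness of $\bigl(THH(\A)\prof_{(0)}\bigr)^{t\T}$, i.e.\ the $\T$-Tate construction applied to the rationalization of the profinite completion. This localized version is precisely what arises from commuting rationalization past the homotopy limits in Goodwillie's integral $TC$ (Lemmas \ref{lem:ifiTC}--\ref{lem:identifyingTR} and Corollary \ref{cor:end}), and it is proved combinatorially via almost free cyclic objects, not via ``integral gluing of rational and $p$-adic pictures.'' Identifying where the Tate statement actually enters, and in which localized form, is the key organizational point your proposal misses.
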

\begin{remark}
  \begin{enumerate}
  \item The topological cyclic homology in question is Goodwillie's
  integral version.  We will recall the necessary details when we need
  them in section \ref{sec:TCandTfixed}.
  \item Theorem \ref{thm:main} says that, under the given connectivity hypotheses, the homotopy
fiber of the cyclotomic trace satisfies excision: it preserves
homotopy cartesian squares.  In the commutative case, the provision that the maps are $0$-connected assures
the connection to geometry: 
$Spec(\pi_0f^j)$ are closed imbeddings, and so affine results are
geometrically interesting.  
Note however, that our ring spectra are not assumed to be commutative.
\item 
It would be desirable to have a statement where just one of the maps,
say $f^1$, were $0$-connected.  With the present line of proof this is
not obtainable, essentially because of a technicality
($\operatorname{Ext}$-completion of infinite sums of torsion modules need not be
torsion), which vanishes under certain finiteness conditions.  We have
refrained from pursuing this issue since it would lengthen the
exposition significantly.
  \end{enumerate}
\end{remark}

\subsection{Notation}
\label{sec:notation}

The category of finite sets and injections is denoted $\I$.  If $X$ is a spectrum, $X\prof$ is its profinite completion and $X_{(0)}$ its rationalization.  If $\mathcal X$ is a cube of spectra, $\ifi\mathcal X$ is the iterated homotopy fiber.  If $M$ is a simplicial abelian group, $HM$ is the associated Eilenberg-Mac Lane spectrum.  The results in this paper are independent of choice of framework for symmetric monoidal smash products, but for concreteness the spectra are supposed to be simplicial functors. 

If $k$ is a natural number, we let $\mathbf k=\{1,\dots,k\}$ and $k_+=\{0,1,\dots,k\}$ considered as a pointed set with base point $0$, and $C_{k+1}$ is the cyclic group of order $k+1$.

\subsection{Side results}
\label{sec:side}

On our way we (re)prove the following results (where $HP$ is periodic
cyclic homology):
\begin{prop}\label{prop:HP}
  \begin{enumerate}
  \item \label{prop:HPitem1}If $A\to B$ is a surjection of
    $\Q$-algebras with nilpotent kernel, then the induced map $HP_n(A)\to HP_n(B)$ is
    an isomorphism for every $n$.
  \item \label{prop:HPitem2} Periodic cyclic homology has the Mayer-Vietoris property, in the sense that for a cartesian square
    $\mathcal A$ of $\mathbf Q$-algebras and surjections, there is a long
    exact sequence 
$$\dots\to HP_n(A^0)\to HP_n(A^1)\oplus HP_n(A^2)\to HP_n(A^{12})\to HP_{n-1}(A^{0})\to \dots$$
  \end{enumerate}
\end{prop}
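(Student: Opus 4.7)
The overall strategy is to work through the rational identification $HP(A)\simeq HH(A)^{t\T}$ of periodic cyclic homology with the $\T$-Tate spectrum of Hochschild homology. Both parts then become vanishing statements for the $\T$-Tate construction applied to a relative or iterated fiber of $HH$.

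For (1), I would first reduce to the square-zero case: if $I=\ker(A\to B)$ satisfies $I^{N}=0$, then the tower of quotients $A/I^{k}$ exhibits $A\to B$ as a finite composite of square-zero extensions $A/I^{k+1}\to A/I^{k}$, and it suffices to treat each step. For a square-zero extension with ideal $J$, the relative spectrum $\hofib(HH(A)\to HH(B))$ admits a natural cyclic filtration whose associated graded is, in simplicial degree $n$ and filtration weight $p\ge 1$, a sum of shuffled copies of $B^{\otimes (n+1-p)}\otimes J^{\otimes p}$ with the Connes $\T$-action permuting the factors. The key assertion is that each such weight-$p$ piece has vanishing rational $\T$-Tate spectrum, because its $\T$-action is, in a precise homotopical sense, induced from a proper closed subgroup of $\T$. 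Assembling the filtration then gives $\hofib(HH(A)\to HH(B))^{t\T}\simeq 0$.

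For (2), given the cartesian square $\A$ with $f^{1}$ and $f^{2}$ surjective, the cartesian property identifies $\ker(A^{0}\to A^{1})\cong\ker(A^{2}\to A^{12})$ as the common kernel $I$; this is what drives excision. I would show $\ifi HP(\A)\simeq 0$, which is equivalent to the long exact sequence in the statement. Applying $(-)^{t\T}$ to the iterated homotopy fiber of $HH$ on $\A$, and using that $HH$ commutes degreewise with the formation of fibers of surjections at the simplicial level, the iterated fiber is controlled entirely by $I$, and the same induced-$\T$-spectrum analysis used in (1) shows its Tate spectrum vanishes. In contrast to (1), $I$ is no longer nilpotent so the filtration by powers of $I$ is genuinely infinite, and one needs a $\mathrm{lim}^{1}$-vanishing argument provided by rationality to conclude.

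The main obstacle in both arguments is the $\T$-Tate vanishing for the ``induced'' filtration layers, and this is where the rational hypothesis is indispensable: one needs transfer/restriction equivalences between $\T$ and its finite cyclic subgroups that only exist rationally, and these have to be organized compatibly with the bar filtration. Once this vanishing is in place, both parts follow formally, using in (2) the standard fact that $\ifi HP(\A)\simeq 0$ is equivalent to the existence of a Mayer--Vietoris long exact sequence.
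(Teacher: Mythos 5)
Your overall plan --- filter the relevant relative or iterated-fiber Hochschild complexes by weight, prove that each weight layer has vanishing rational periodic homology (equivalently, vanishing rational $\T$-Tate), and assemble using connectivity or a limit argument --- does match the paper's strategy. The paper works directly with the chain-complex definition of $HP$ via Lemma~\ref{lem:HPfreevanish}, rather than the rational identification $HP\simeq HH^{t\T}$, but it carries out exactly the parallel Tate argument for $THH$ in Proposition~\ref{prop:Tatevanish}, and it is explicitly designed so that the two run in tandem.

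Where your proposal has a genuine gap is the crucial vanishing step. You assert that each weight-$p$ layer has a $\T$-action that is ``induced from a proper closed subgroup of $\T$.'' This is not what the relevant objects look like, and it is not what makes the argument work. The paper's Lemmas~\ref{lem:HjG}, \ref{lem:ratret} and \ref{lem:ratret2} show something more delicate: the weight-$k$ layer $H(k)$ admits a \emph{cyclic} map $H(k)\to j_*G(k)$ into a genuinely free cyclic object (given concretely by summing over the $k$ cyclic rotations that bring a distinguished factor to position zero), and the composite $H(k)\to j_*G(k)\to H(k)$ with the counit is multiplication by the integer $k$. Hence $H(k)$ is only a \emph{rational retract} of a free cyclic object; it is not induced from any subgroup, and for $k>1$ it is not itself free (only the case $k=1$ is). The appearance of the specific integer $k$ as the retraction defect is precisely where the rationality hypothesis enters, and it is the combinatorial heart of the whole proof; it is missing from your proposal, so the Tate-vanishing claim is unsupported.

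Two further points. For part (2) you also need the reduction from general surjections to split surjections, since the clean weight decomposition of the iterated fiber (into pieces indexed by functions $\Z/(q+1)\to\Z/3$) only exists in the split case. The paper handles this in Lemma~\ref{lem:XPA} by resolving the bottom corner by a bar-type simplicial object and using simplicial homotopy equivalences so that the functor can be applied degreewise; ``$HH$ commutes with fibers of surjections'' by itself does not deliver this reduction. Finally, the passage from individual weight layers to the whole complex is controlled in the paper by a connectivity estimate (the weight-$k$ layer is concentrated in simplicial degrees $\ge k-1$), not by a ``$\lim^1$-vanishing provided by rationality''; rationality does not in general kill $\lim^1$, and you would need the connectivity statement to make this step go through.
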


The proofs are very hands-on, filtering cyclic
modules through filtrations where the subquotients are built out of
retracts -- up to multiplication by concrete integers -- of free cyclic
objects (on which periodic homology vanishes).  

The good thing about this is that the proofs are combinatorial enough
to work directly to show vanishing results $\T$-Tate homology of
$THH(-)\prof_{(0)}$, where $THH$ is topological Hochschild
homology.  For instance
\begin{prop}\label{prop:Tatevanish}
  If  $\A$ is a cartesian square of connective $\ess$-algebras and
  $0$-connected maps, then the square
$\left(THH(\A)\prof_{(0)}\right)^{t\T}$ is cartesian.
\end{prop}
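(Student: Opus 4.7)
The plan is to reduce the assertion to a vanishing statement on the iterated homotopy fibre of $THH(\A)$, and then to transport the combinatorial filtration argument behind Proposition~\ref{prop:HP} from cyclic $\Q$-modules to cyclic spectra.

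First, a square of spectra is homotopy cartesian iff its iterated homotopy fibre is contractible. Rationalization and profinite completion are exact (both come from smashing/cellular localizations), and the $\T$-Tate construction is exact in each variable; in a stable category finite homotopy limits commute with all homotopy colimits, so $(-)_{h\T}$, $(-)^{h\T}$, and therefore $(-)^{t\T}$ all commute with $\ifi$. Consequently the proposition follows once we show
\[
\bigl((\ifi\, THH(\A))\prof_{(0)}\bigr)^{t\T}\simeq\ast.
\]

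Second, I would analyse $\ifi\, THH(\A)$ as a cyclic spectrum. The hypotheses --- cartesianness of $\A$ together with the $0$-connectedness of $f^1$ and $f^2$ --- allow one to rewrite this iterated fibre by means of a bar/cyclic-nerve construction on the homotopy fibres of $f^1$ and $f^2$ (which are $1$-connected). Filtering by simplicial degree produces a filtration whose subquotients are free cyclic spectra built on smash products of connective spectra --- exactly the spectral analogue of the free cyclic modules used in the proof of Proposition~\ref{prop:HP}.

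Third, I would import the combinatorics used there. In the algebraic setting one showed that on a free cyclic $\Q$-module the periodic cyclic homology vanishes because the induced $\T$-action is free enough --- up to integer multiplications, which become invertible after rationalizing --- to make the norm map an equivalence. The same combinatorial input, applied to a free cyclic spectrum after $(-)\prof_{(0)}$, shows that its $\T$-Tate vanishes: the integer multiplications are absorbed by $(-)_{(0)}$, and profinite completion of a free cyclic spectrum remains, layer by layer, free enough for the Tate to vanish. Running this vanishing through the filtration by induction on the subquotients gives the required vanishing for $\ifi\, THH(\A)\prof_{(0)}$.

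The hard part will be controlling the interaction between profinite completion and the (potentially infinite) colimits produced by the filtration. As flagged in the remark after Theorem~\ref{thm:main}, $\operatorname{Ext}$-completion of an infinite direct sum of torsion objects need not be torsion, so one must check that each layer of the filtration stays inside a class of spectra on which $(-)\prof_{(0)}$ is well behaved. The simultaneous $0$-connectedness of $f^1$ and $f^2$ supplies exactly the bounded-below/finiteness control needed to push the combinatorics through; weakening it to one $0$-connected map would break precisely this step.
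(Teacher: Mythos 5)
There is a genuine gap, and some substantive errors, though the overall strategy (filter the iterated fibre, show the subquotients are essentially free cyclic, use that Tate vanishes on free cyclic objects) is the right one.

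First, you miss the step the paper leans on most heavily: linearization. The paper does \emph{not} carry out the cyclic-nerve combinatorics directly on the cyclic spectrum $\ifi\,THH(\A)$. Instead it invokes Lemma~\ref{lem:THHvsHH}, which says the linearization $THH(HA)\to H(HH(A))$ is a torsion equivalence, hence an equivalence after $(-)\prof_{(0)}$. That replaces $THH(\A)\prof_{(0)}$ by the Eilenberg--MacLane spectrum of $HH(\A)\prof_{(0)}$, so the decomposition can be done for \emph{cyclic abelian groups}, where distributivity of tensor products over direct sums makes the split-square decomposition $\ifi HH(\A)\cong\bigoplus_{k>0}H(k)$ completely explicit. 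Once one is in that setting, Proposition~\ref{prop:Tatevanish} is just Lemma~\ref{lem:XPA} applied with $P(A)=HH(A)\prof_{(0)}$ and $X(M)=(HM)^{t\T}$, with Lemma~\ref{lem:Tateoffreevanish} supplying the vanishing input. Your sketch proposes to do all of this at the spectrum level, which is possible in principle (the paper's $T(k)$ from Section~\ref{sec:TCandTfixed} are the spectrum analogues of the $H(k)$), but then you would have to reprove, at the spectrum level and for the Tate construction, all the combinatorial lemmas of Section~2; as written the sketch simply asserts this transfer.

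Second, several details are wrong and would derail the argument if pursued literally. The maps $f^j$ are $0$-connected, so their fibres are merely connective, not $1$-connected; the conclusion of Lemma~\ref{lem:XPA} holds already under this weaker hypothesis once one has reduced (via the bisimplicial resolution in that lemma's proof) to split squares where $I(1)\cap I(2)=0$. The relevant filtration is not by simplicial degree but by the weight $k=|A_f|$, i.e.\ by how many tensor factors come from the ideals; it is this weight that makes the connectivity of the subquotient $H(k)$ tend to infinity. And the subquotients are not free cyclic but only \emph{almost} free: the composite $H(k)\to j_*G(k)\to H(k)$ is multiplication by $k$, so they become retracts of free cyclic objects only after rationalizing. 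Calling them free skips exactly the arithmetic issue that the whole construction is designed to navigate.

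Third, your proposed handling of the profinite-completion subtlety is not yet an argument. The point the paper makes (inside the proof of Lemma~\ref{lem:XPA}) is that because the connectivity of $H(k)$ goes to infinity with $k$, the sum $\bigoplus_{k>0}H(k)$ is weakly equivalent to the product $\prod_{k>0}H(k)$, and the product commutes with $(-)\prof_{(0)}$; one then applies $(-)\prof_{(0)}$ termwise and sees each term as a rational retract of a free cyclic object. The statement ``profinite completion of a free cyclic spectrum remains, layer by layer, free enough'' is not correct as written: $(j_*M)\prof$ need not be of the form $j_*N$, and it is only the termwise structure together with the connectivity estimate that saves the day. You correctly flag that this is the delicate step, but you attribute the needed control to the $0$-connectedness of both maps, whereas what actually furnishes the connectivity bound is the weight filtration itself.

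Finally, the non-split case has to be handled separately: the direct-sum decomposition exists only for split squares, and the reduction in Lemma~\ref{lem:XPA} proceeds via a bisimplicial resolution and a simplicial homotopy equivalence that is preserved by applying $X$ degreewise --- notably \emph{not} by assuming $X$ commutes with geometric realization. Your sketch does not distinguish these cases.
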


\begin{remark}
  The problem of showing the main result without the connectivity
hypothesis on all maps, essentially boils down to the fact that we are not able to
prove that $\left(THH(A)\prof_{(0)}\right)^{t\T}\to
\left(THH(A_0)\prof_{(0)}\right)^{t\T}$ is an equivalence for a graded ring
$A=A_0\oplus A_1\oplus\dots$ without some finiteness hypothesis.
\end{remark}

\subsection{The core of the proof of Theorem 
  \ref{thm:main}}\label{sec:outline} Consider the arithmetic square
$$\begin{CD}
  \ifi\hofib_{trc}(\A)@>>>\ifi\hofib_{trc}(\A)_{(0)}\\
  @VVV@VVV\\
  \ifi\hofib_{trc}(\A)\prof@>>>\ifi\hofib_{trc}(\A)\prof_{(0)}
  \end{CD}.
$$
Theorem \ref{thm:main} claims that $\ifi\hofib_{trc}(\A)\simeq *$, and
so it clearly suffices to show that $\ifi\hofib_{trc}(\A)_{(0)}\simeq\ifi\hofib_{trc}(\A)\prof_{(0)}\simeq *$.

The profinite
completion part, namely that $\ifi\hofib_{trc}(\A)\prof_{(0)}$ is
contractible, is the main result of \cite{DK}, which
relied heavily on the work of Geisser and Hesselholt \cite{GH} in the
discrete ring case, which again used ideas from Corti\~nas' rational paper
\cite{Cor}. 

Suitably reinterpreted, Corti\~nas proved that the
composite 
$$K(\A)_{(0)}\to TC(\A)_{(0)}\to
\left(THH(\A)_{(0)}\right)^{h\T}$$ was cartesian. Corti\~nas formulated
his result in terms of ``negative
cyclic homology'', see see \ref{Def:HPHN} below, but in view of the
equivalence $THH(A)_{(0)}\simeq H(HH(A)\otimes\Q)$ of lemma \ref{lem:THHvsHH},
$\left(THH(\A)_{(0)}\right)^{h\T}$ is just another way of expressing
the cube associated with negative
cyclic homology.  

Hence, to conclude the main theorem, all we have to
do is to prove that 
\begin{lemma}\label{lem:main}  Let 
$\A$
be a homotopy cartesian square of connective ring spectra and
$0$-connected maps. 
Then the resulting cube 
  $$TC(\A)_{(0)}\to
\left(THH(\A)_{(0)}\right)^{h\T}$$ is homotopy cartesian.
\end{lemma}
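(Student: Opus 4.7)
The plan is to identify the iterated homotopy fibre of the map of cubes $TC(\A)_{(0)}\to (THH(\A)_{(0)})^{h\T}$ with something built out of $\T$-Tate constructions on $THH(\A)_{(0)}$, and then conclude by appealing to Proposition~\ref{prop:HP}(\ref{prop:HPitem2}) via Lemma~\ref{lem:THHvsHH}.

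Concretely, starting from Goodwillie's description (to be recalled in section~\ref{sec:TCandTfixed}) of integral $TC$ as a homotopy limit over the tower of genuine $\T$-fixed-points $THH(A)^{C_n}$, together with the norm cofibre sequences $X_{hG}\to X^{hG}\to X^{tG}$ for $G=C_n$ and $G=\T$, I would express the fibre $\hofib(TC(A)_{(0)}\to (THH(A)_{(0)})^{h\T})$ as a homotopy limit of $\T$-Tate-type spectra of the form $(THH(A)_{(0)})^{tC_n}$ and $(THH(A)_{(0)})^{t\T}$. The connectivity hypothesis on $\A$ enters here to guarantee that rationalization commutes with the relevant tower of homotopy limits.

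Lemma~\ref{lem:THHvsHH} then rewrites $THH(A)_{(0)}$ as the Eilenberg--Mac Lane spectrum of the mixed complex $HH(A)\otimes\Q$, so each of the above cubes of Tate spectra is nothing but a cube of periodic cyclic homologies. Modulo the standard passage from a cartesian square of connective ring spectra with $0$-connected maps to a levelwise cartesian simplicial square of $\Q$-algebras and surjections (using that $0$-connectedness gives $\pi_0$-surjectivity and that free resolutions turn the homotopy-cartesian condition into a strict one degreewise), Proposition~\ref{prop:HP}(\ref{prop:HPitem2}) applied in each simplicial degree, reassembled via the usual spectral-sequence argument, yields that each of these cubes is homotopy cartesian. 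Assembling the Tate tower via the cofibre sequences above then gives the cartesianness of the original cube.

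The main obstacle will be the first step: teasing out, from Goodwillie's integral $TC$, a clean description of the fibre of $TC(A)_{(0)}\to (THH(A)_{(0)})^{h\T}$ as a homotopy limit of $\T$-Tate spectra, and checking that rationalization commutes with this limit. Once this is in place, the reduction to Proposition~\ref{prop:HP}(\ref{prop:HPitem2}) via the $HH$-interpretation of rational $THH$ should be essentially formal, since the hypotheses of Theorem~\ref{thm:main} are exactly tailored to produce the kind of cartesian square of surjections of $\Q$-algebras on which that proposition operates.
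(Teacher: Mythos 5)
Your plan runs into two structural problems, and the step you yourself flag as the ``main obstacle'' is exactly where it breaks down.  First, integral $TC$ is not simply a homotopy limit of the genuine fixed points of $THH$: by construction it is glued, via an arithmetic square, from $THH^{h\T}$ and the \emph{profinitely completed} limit $\bigl(\operatorname{holim}_{R,F}THH^{C_n}\bigr)\prof$.  Consequently the fibre of $TC(A)_{(0)}\to(THH(A)_{(0)})^{h\T}$ involves rationalizations of profinitely completed homotopy limits, and there is no clean description of it as a homotopy limit of Tate spectra built from $THH(A)_{(0)}$.  The paper sidesteps this by first invoking Goodwillie's Lemma~\ref{lemma:main} to replace the rationalized cube by the cube $TC(\A)\prof_{(0)}\to(THH(\A)\prof_{(0)})^{h\T}$, and \emph{all} of the subsequent analysis happens in the $\prof_{(0)}$ world.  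In particular the vanishing input you would actually need is not Proposition~\ref{prop:HP}(\ref{prop:HPitem2}) (a purely rational statement) but the $P(A)=HH(A)\prof_{(0)}$ case of Lemma~\ref{lem:XPA}, i.e.\ Proposition~\ref{prop:Tatevanish}.

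Second, the commutation of rationalization with the limit defining $TC$ does not follow from the connectivity hypothesis on $\A$.  The restriction maps do raise connectivity, but the Frobenius maps do not, so a bare connectivity estimate says nothing about $\operatorname{holim}_{R,F}$.  The argument the paper actually runs is to decompose $\ifi THH(\A)$ (after reducing to split squares) into pieces $T(k)$ of increasing connectivity, and then to prove---this is the nontrivial content of Lemmas~\ref{lem:ifiTC} and~\ref{lem:Rsplit} and Corollary~\ref{lem:identifyingTR}---that the restriction tower \emph{splits} after $\prof_{(0)}$, yielding an explicit identification of $\ifi TC(\A)\prof_{(0)}$ with $\prod_k\bigl(\operatorname{holim}_F H(H(k))^{hC_n}\bigr)\prof_{(0)}$.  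The comparison with $\ifi\bigl(THH(\A)\prof_{(0)}\bigr)^{h\T}$ is then completed by Lemma~\ref{lem:retroffree}, which exploits the fact that each $H(k)$ is an \emph{almost free} cyclic object.  Finally, your proposed ``apply Proposition~\ref{prop:HP}(\ref{prop:HPitem2}) degreewise and reassemble via the usual spectral sequence'' also fails for the reason noted at the end of the proof of Lemma~\ref{lem:XPA}: the Tate construction cannot be computed degreewise in a simplicial resolution, and one has to replace that step by the simplicial-homotopy-equivalence trick used there.  Your end goal---reduce to an $HP$/Tate-type vanishing result---is the right one in spirit, but everything that makes the proof work (the arithmetic-square reduction, the almost-free-cyclic decomposition, the splitting of the $R$-tower, the degreewise issues) is hidden in the steps you wave at.
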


This follows from the results in section \ref{sec:TCandTfixed}.

\section{Excision and Tate homology}
That rational periodic homology is excisive is well known, and
follows from Cuntz and Quillen's models \cite{CQ}.  However, we need a
proof that is generalizable to a slightly more involved situation.  

In this section we give such a proof.  A very
similar argument gives a simpler proof of Goodwillie's result that
rational periodic homology is insensitive to nilpotent extensions and
only sees the degree zero part of (non-negatively) graded algebras.
As a matter of fact, the way we present it, the results are logically intertwined.

\subsection{Free cyclic objects}
\label{sec:freecyc}
Let $\Delta^o$ and $\Lambda^o$ be the simplicial and cyclic
categories, and let $j\colon\Delta^o\to\Lambda^o$ be the inclusion.
If $X$ is a simplicial object in a category with finite coproducts, we let $j_*X$ be the ``free cyclic
object'' on $X$ (\ie the left Kan
extension associated to the inclusion $j\colon\Delta^o\to\Lambda^o$,
which exists if the category in question has finite coproducts).
Explicitly, the factorization properties of
$\Lambda^o$ (see \eg \cite[6.1.8]{Loday}) give that the $q$-simplices are given by 
$(j_*X)_q=\coprod_{C_{q+1}} X_q$, the coproduct indexed over the
cyclic group $C_{q+1}=\{1,t,t^2,\dots,t^q\}$ with structure maps

\begin{align*}
  d_r(t^s,a)&=
  \begin{cases}
    (t^s,d_{r-s}a)& \text{if $0\leq s\leq r\leq q$ }\\
    (t^{s-1},d_{q+1+r-s}a)& \text{if $0\leq r<s\leq q$}
  \end{cases}\\
s_r(t^s,a)&=
\begin{cases}
  (t^s,s_{r-s}a)& \text{if $0\leq s\leq r\leq q$ }\\
    (t^{s+1},s_{q+1+r-s}a)& \text{if $0\leq r<s\leq q$ }
\end{cases}\\
t(t^s,a)&=(t^{s+1},a),
\end{align*}
where we have written $(t^s,a)$ to signify an ``element'' $a\in X_q$ in the
$t^s$th summand of $(j_*X)_q$.

If $Y$ is a cyclic object, the adjoint of the identity is the map
$j_*Y\to Y$ given by $(s,y)\mapsto t^{s}y$.

\begin{ex}\label{ex:Qdef}
 A pointed symmetric monoid $N$ is a symmetric monoid in the symmetric monoidal category of pointed sets and smash products.  The smash product becomes the coproduct in the category of pointed symmetric monoids.  Considering $N$ as a constant simplicial object, the free cyclic object $j_*N$ 
is the cyclic nerve: $(j_*N)_q=N^{\smsh q+1}$ (this is true in general for symmetric monoids).  

The following example of a symmetric pointed monoid will be
important to us shortly: $Q=\{*,0,1\}$ pointed at $*$, with $0+0=0$,
$0+1=1$ and $1+1=*$.  We see that $j_*Q\cong\bigvee_{k=0}^\infty
Q(k)$ where $Q(k)$ is the cyclic subset of $j_*Q$ whose $q$-simplices are either the base point or of
the form $n_0\smsh\dots\smsh n_q$ 
where the sum of the $n$'s is $k$ (so that we have a bijection
$Q(k)_q\cong \left\{n_0,\dots,n_q\in\{0,1\}|\sum n_i=k\right\}_+$). 
\end{ex}

\subsection{Rational retracts of free cyclic objects}
\label{sec:ratret}
We will need a result (Lemma \ref{lem:ratret2} below) about variants of Hochschild homology which naturally are rational retracts of free cyclic objects.  However, we start with a simpler version since in many situations this is all what is needed and it is easier to encode.  In order to highlight certain phenomena we choose an indexation in the simple example which is not the same as the one we fall back on in the general case.  

\begin{Def}
  A cyclic spectrum or simplicial abelian group $Y$ is said to be an {\em almost
    free cyclic object} if there is a simplicial object $X$ and maps $Y\to
  j^*X\to Y$ such that the composite induces multiplication by some integer
  $k\neq 0$ on homotopy $\pi_*Y\to\pi_*Y$.
\end{Def}

If $A$ is a discrete ring, the Hochschild homology $HH(A)$ of $A$ is the
cyclic abelian group $[q]\mapsto A^{\otimes q+1}$ (with tensor products over
the integers unless otherwise noted).  If $A$ is a simplicial ring,
$HH(A)$ is the associated cyclic simplicial abelian group.  Flatness
is always assumed (so really one should take free resolutions, and we
are considering what some people call Shukla homology.  Since all the
applications in this section will be rational and applied to rings that
already may have a simplicial direction, we do not
bother making this explicit).

For a ring $B$ and $B$-bimodule $M$, let $B\ltimes M$ be the square zero
extension of $B$ by $M$.  We have a decomposition 
$$HH(B,M)\cong\oplus_{k\geq0}H(k)(B,M)
$$ of cyclic abelian groups, where $H(k)(B,M)$ consists of the tensors with exactly $k$ factors of $M$ in each
dimension.  

If we set $M(*)=0$, $M(0)=B$, $M(1)=M$, and
$M(n)=\bigotimes_{j=0}^qM(n_j)$ for $n=n_0\smsh\dots\smsh n_q\in
(j_*Q)_q$, where $Q=\{*,0,1\}$ is the pointed symmetric monoid of
example \ref{ex:Qdef}, we get that the group of
$q$-simplices of $H(k)(B,M)$ is isomorphic to
$$\bigoplus_{n\in (Q(k))_q} M(n)$$
where $Q(k)$ is the cyclic subcomplex of $j_*Q$ defined in \ref{ex:Qdef}.
We will use the notation $a/n$ to specify an object
$a=a_0\otimes\dots\otimes a_q$ in the $n=n_0\smsh\dots\smsh n_q$ summand.

The summands with $n_0=1$ (\ie the zeroth factor in the tensor product
$M(n)$ is $M(1)=M$) assemble to a {\em simplicial} subcomplex $G(k)(B,M)\subseteq
H(k)(B,M)$.

If $H$ is a simplicial abelian group, the free cyclic abelian group
$j_*H$ has $q$-simplices
$\bigoplus_{C_{q+1}}H_q
$, and we
write an element $h$ in the $t^j$th summand as $(t^j,h)$.  

\begin{lemma}\label{lem:HjG}
  There is a
cyclic map $$H(k)(B,M)\to j_*G(k)(B,M)$$ given by sending
$a=a_0\otimes\dots\otimes a_q$ in the
$n$'th summand of $H(k)(B,M)_q$
to
  $$\sum_{n_j=1}(t^j,t^{-j}a/t^{-j}n)=\sum_{n_j=1}(t^j,a_j\otimes\dots\otimes
  a_{j-1}/n_j\smsh\dots\smsh n_{j-1}),$$
where the sums are over all $j$ such that $n_j=1$.  
\end{lemma}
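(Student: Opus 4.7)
The plan is to verify well-definedness of the stated formula and then to check that it commutes with the generating cyclic structure maps.

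For well-definedness, each term $(t^j, t^{-j}a/t^{-j}n)$ must lie in $(j_*G(k)(B,M))_q = \bigoplus_{s \in C_{q+1}} G(k)(B,M)_q$. Since $G(k)(B,M)_q$ consists of those summands whose label has $1$ in position $0$, and the rotated label $t^{-j}n = n_j \smsh n_{j+1} \smsh \cdots \smsh n_{j-1}$ has zeroth entry $n_j = 1$ by the condition on the summation index, this is immediate. Hence the formula defines a morphism of graded abelian groups, and it only remains to verify cyclicity.

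Since $\Lambda^o$ is generated by $\Delta^o$ together with the cyclic operators $t_q$, it suffices to check that $\phi$ commutes with each face map $d_r$, each degeneracy $s_r$, and the cyclic operator $t$. The verification for $t$ is a reindexing: writing $t \cdot a/n = (ta)/(tn)$ with $(tn)_j = n_{j-1}$, one computes
\[\phi(t \cdot a/n) = \sum_{j:\, n_{j-1}=1}(t^j, t^{-j+1}a/t^{-j+1}n) = \sum_{i:\, n_i=1}(t^{i+1}, t^{-i}a/t^{-i}n) = t \cdot \phi(a/n),\]
using $t(t^s, y) = (t^{s+1}, y)$ on $j_*G(k)$. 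For $d_r$ and $s_r$ the checks use the explicit formulas from \S\ref{sec:freecyc}: I would split $\phi(a/n)$ into terms indexed by $j$, compute $d_r(t^j, t^{-j}a/t^{-j}n)$ separately in the cases $j \leq r$ and $j > r$, and reassemble to compare with $\phi(d_r(a/n))$.

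The main obstacle I expect is the case $n_r = n_{r+1} = 1$ for some $r < q$: in $\phi(d_r(a/n))$ the label addition $n_r + n_{r+1} = 1+1 = *$ kills the term since $M(*) = 0$; on the $d_r \phi(a/n)$ side, both contributions from $j = r$ and $j = r+1$ must vanish for compatible reasons---one becoming $d_0$ applied to a rotated label with $1$'s in positions $0,1$, the other $d_q$ applied to a rotated label with $1$'s in positions $0,q$, each again producing a $*$ in the resulting label and hence zero. Beyond this case and the analogous bookkeeping for the remaining indices $j$ (where $d_r$ does not interact with a $1$ of the label), the argument is purely combinatorial; no deeper idea is needed.
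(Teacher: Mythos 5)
Your proposal is correct and takes essentially the same route as the paper: you verify compatibility with the cyclic structure maps directly. The paper phrases it slightly more compactly, invoking for a single $\phi\in\Delta$ the unique factorization $\phi^*t^j = t^{(\phi,j)}\phi_j^*$ in $\Lambda^o$ and observing that the two index sets $\{i : (\phi^*n)_i=1\}$ and $\{(\phi,j) : n_j=1\}$ coincide, whereas you propose to check the generating face, degeneracy, and cyclic operators one at a time; these amount to the same verification. Your explicit treatment of the cyclic operator $t$ is right, and you have correctly isolated the one place where the index-set equality needs care -- the degenerate case $n_r=n_{r+1}=1$, where both $(\phi,r)$ and $(\phi,r+1)$ map to the same target index $r$ while $(d_r^*n)$ has no index at all, and the match-up is restored because both contributions land in $M(*)=0$ (via $d_0$ and $d_q$ acting on the rotated labels). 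This is exactly the cancellation the paper sweeps under the word ``basically,'' so your account is, if anything, slightly more transparent than the paper's. The only thing not fully carried out is the routine reindexing for the nondegenerate face and all degeneracy cases, but that matches the level of detail in the original.
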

\begin{proof}
  To check that this is a well defined cyclic map, let $\phi\in\Delta$,
use the definition of the structure maps in the free cyclic object and
unique factorization $\phi^*t^j=t^{(\phi,j)}\phi_j^*$ to see that the
map commutes with $\phi^*$, basically because the index sets of the
two resulting sums,
$\{i|(\phi^*n)_i=1\}$ and $\{(\phi,j)|n_j=1\}$, are equal.
\end{proof}

For future reference we note
\begin{lemma}\label{lem:ratret}
  The composite 
$$ H(k)(B,M)\to j_*G(k)(B,M)\to H(k)(B,M)
$$ 
is multiplication by $k$, where the first map is defined in Lemma \ref{lem:HjG} and the second is the adjoint of the inclusion.  Hence $H(k)(B,M)$ is an almost free cyclic abelian group. 
\end{lemma}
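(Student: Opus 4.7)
The plan is a direct computation: unwind the composite on an arbitrary simplex and count.

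Fix $q\geq 0$ and a simplex $a/n\in H(k)(B,M)_q$ with $a=a_0\otimes\dots\otimes a_q$ and $n=n_0\smsh\dots\smsh n_q\in Q(k)_q$, so that exactly $k$ of the entries $n_j$ are equal to $1$ (the rest being $0$). By the formula in Lemma \ref{lem:HjG}, the first map sends $a/n$ to
$$\sum_{j:n_j=1}(t^j,\,t^{-j}a/t^{-j}n)\in (j_*G(k)(B,M))_q,$$
where each summand indeed lands in $G(k)(B,M)_q$ because the leading index of $t^{-j}n=n_j\smsh n_{j+1}\smsh\dots\smsh n_{j-1}$ equals $n_j=1$. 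Then the adjunction counit $j_*G(k)(B,M)\to H(k)(B,M)$, being the map $(s,y)\mapsto t^s y$ recalled just before Example \ref{ex:Qdef} (now computed with the cyclic action of $H(k)(B,M)$ rather than of $G(k)(B,M)$, since $G(k)$ carries only a simplicial structure), sends each summand $(t^j,t^{-j}a/t^{-j}n)$ to $t^j\cdot(t^{-j}a/t^{-j}n)=a/n$.

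Therefore the composite sends $a/n$ to $\sum_{j:n_j=1}a/n=k\cdot(a/n)$, where the equality uses that there are exactly $k$ indices $j$ with $n_j=1$. Since $H(k)(B,M)$ is generated in each simplicial degree by such elements $a/n$, the composite is multiplication by $k$, as claimed.

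There isn't really a hard step here once Lemma \ref{lem:HjG} is in hand; the only thing to check carefully is that applying $t^j$ in $H(k)(B,M)$ to the element $t^{-j}a/t^{-j}n$ indeed returns $a/n$, which is immediate because $t$ is invertible on the cyclic object and $t^j\circ t^{-j}=\mathrm{id}$. The proposition that $H(k)(B,M)$ is almost free then follows: take $X=G(k)(B,M)$ and use the map of Lemma \ref{lem:HjG} together with its retraction on $\pi_*$, which is multiplication by $k\neq 0$ as soon as $k\geq 1$.
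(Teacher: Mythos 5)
Your computation is correct and is exactly the direct unwinding that the paper leaves implicit (the lemma is stated without proof as an immediate consequence of Lemma \ref{lem:HjG}). You correctly identify the counit $j_*G(k)(B,M)\to H(k)(B,M)$ as $(t^j,y)\mapsto t^j y$ computed in the cyclic structure of $H(k)(B,M)$, so that each of the $k$ summands returns $a/n$, and you are right to note the caveat that this only exhibits $H(k)(B,M)$ as almost free for $k\geq 1$.
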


As an immediate corollary (since rationalization commutes with
infinite coproducts) we get 
\begin{cor}
  The fiber of $HH(B\ltimes M)\to HH(B)$ is rationally a retract of a
  free cyclic object.
\end{cor}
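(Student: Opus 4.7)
My plan is to assemble the pieces from the preceding lemma, where the only real move after that is that rationally the integer $k\neq 0$ becomes invertible and that $j_*$ commutes with coproducts.

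First I would identify the fiber explicitly. Since $HH(B\ltimes M)\cong\bigoplus_{k\geq 0}H(k)(B,M)$ as cyclic abelian groups and the summand $H(0)(B,M)$ is exactly $HH(B)$ (with the projection $HH(B\ltimes M)\to HH(B)$ being projection onto this summand), the fiber is canonically isomorphic to $\bigoplus_{k\geq 1}H(k)(B,M)$. Upon rationalization this becomes $\bigoplus_{k\geq 1}H(k)(B,M)_{(0)}$, since rationalization commutes with infinite coproducts.

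Next, for each $k\geq 1$, Lemma \ref{lem:ratret} furnishes a pair of cyclic maps $H(k)(B,M)\rightleftarrows j_*G(k)(B,M)$ whose composite is multiplication by $k$. Rationally I divide the map $H(k)(B,M)\to j_*G(k)(B,M)$ by $k$ to obtain a retract diagram $H(k)(B,M)_{(0)}\rightleftarrows j_*G(k)(B,M)_{(0)}$ whose composite is the identity. Taking coproducts over all $k\geq 1$, and using that $j_*$, being a left Kan extension, commutes with coproducts, I get a retract diagram
$$\bigoplus_{k\geq 1}H(k)(B,M)_{(0)}\;\rightleftarrows\;j_*\Bigl(\bigoplus_{k\geq 1}G(k)(B,M)\Bigr)_{(0)}.$$
The right-hand side is the rationalization of a free cyclic object, so the fiber of $HH(B\ltimes M)\to HH(B)$ is rationally a retract of a free cyclic object, as desired.

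The only subtlety I anticipate is the bookkeeping that the scalar $1/k$ makes sense on a rational simplicial abelian group and that one may combine the maps summand-wise into a single cyclic map — but this is purely formal because $H(k)(B,M)_{(0)}$ is a rational (and hence $\mathbb{Z}[1/k]$-module) cyclic object, and cyclic maps are stable under direct sums. There is no need to choose a uniform scalar across the different values of $k$, which is what makes the argument work smoothly.
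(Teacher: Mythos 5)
Your argument is exactly the paper's intended proof, just spelled out: identify the fiber with $\bigoplus_{k\geq 1}H(k)(B,M)$, invert the integer $k$ rationally in the retraction from Lemma \ref{lem:ratret}, and pass through the coproduct using that $j_*$ and rationalization both commute with coproducts. No discrepancies.
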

However, our applications are more delicate in that they need to
navigate rather carefully through functors that are not particularly
well behaved with respect to (co)limits, and we will need to refer back to the
precise formulation in Lemma
\ref{lem:ratret} and to the slightly more general Lemma \ref{lem:ratret2} below.  

Let $A=A_0$ be a ring and let $A_1,\dots,A_l$ be $A$-bimodules.  Let $A\ltimes(A_1\oplus\dots\oplus A_l)$ be the square zero extension of $A$. It is convenient to grade this ring, so that $A_j$ is in degree $j$.  

Consider the partitions of $k\geq 0$, \ie sequences $P=\left(k_1\geq
k_2\geq\dots\geq k_r\right)$ of positive integers such that their sum
$k_1+k_2+\dots+k_r$ is $k$ (the empty partition is a partition of $0$).  The {\em length} of $P$ is $r$ and its {\em norm} is 
$|P|=k_1k^{k-1}+k_2k^{k-2}+\dots+k_rk^{k-r}$.  Partitions of $k$ are ordered according to their norm; if $k=4$ we get that $(4)>(3+1)>(2+2)>(2+1+1)>(1+1+1+1)$.

For our purposes it is convenient to use distributivity to decompose the Hochschild homology into cyclic summands:
$$HH(A\ltimes(A_1\oplus\dots\oplus A_l))\cong
\bigoplus_{k\geq 0}\bigoplus_{P}H(P)
$$
where the second summand is over all partitions $P=\left(k_1\geq
k_2\geq\dots\geq k_r\right)$ of $k$, and $H(P)=H(P)(A_0;A_1,\dots A_l)$ is the cyclic abelian group whose group of $q$-simplices is
$$\bigoplus_{f}\bigotimes_{j=0}^qA_{f(j)}
$$
where $f$ varies over the set $S_q(P)$ of functions $C_{q+1}\to l_+$ such that the nonzero values of $f$ correspond to (a permutation of) $P$; \ie  such that there is a bijection $\sigma\colon \mathbf r\to Supp(f)$ with $f(\sigma(j))=k_j$.

Let $G(P)$ be the subsimplicial object of $H(P)$ consisting of the summands corresponding to the $f\in S_q(P)$ with $f(0)\neq 0$, and let $H(P)\to j_*G(P)$ be the cyclic map which sends $a$ in the $f\in S_q$ summand to $\sum_{j\in Supp(f)}(t^{f(j)},t^{-f(j)}a)$. 

We note that in the case $B=A$, $M=A_1$, $r=k$, $l=1$, we are in the situation of Lemma \ref{lem:ratret}.  The conclusion holds in the more general context:
\begin{lemma}\label{lem:ratret2}
  Let $A$, $A_1,\dots,A_l$ and $P=\left(k_1\geq\dots\geq k_r\right)$ a
  partition of $k>0$.  The map $H(P)\to j_*G(P)$ is well defined, and
  the composite $$H(P)\to j_*G(P)\to j_*H(P)\to H(P)$$ is multiplication
  by the length $r$ of $P$, and so $H(P)=H(P)(A;A_1,\dots,A_l)$ is an almost free cyclic object.
\end{lemma}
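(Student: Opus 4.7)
The plan is to follow the template established by Lemma \ref{lem:HjG} and Lemma \ref{lem:ratret}, with the role previously played by the single non-zero value ``$1$'' replaced by the multiset of values recorded by the partition $P$. There are three tasks: (i) check that the prescription defining $H(P)\to j_*G(P)$ is well defined on each set of $q$-simplices, (ii) check that it commutes with all structure maps of the cyclic object, and (iii) compute the composite and conclude.

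Task (i) is essentially formal: for $f\in S_q(P)$ and $j\in \mathrm{Supp}(f)$, the cyclically rotated function $t^{-j}f$ satisfies $(t^{-j}f)(0)=f(j)\neq 0$, so the rotated tensor $t^{-j}a$ is an element of the appropriate summand of $G(P)_q$, indexed by the function in $S_q(P)$ obtained by rotating $f$. Thus each term of the sum lands in the correct summand of $(j_*G(P))_q = \bigoplus_{C_{q+1}} G(P)_q$, in the slot corresponding to $t^j$.

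For task (ii), I would reduce to checking compatibility with a generator $\phi$ of $\Lambda^o$ (face, degeneracy, or cyclic operator), using the unique factorization $\phi^* t^j = t^{(\phi,j)}\phi_j^*$ available in the cyclic category. The point is that applying $\phi^*$ to the image of $a/f$ distributes over the sum and, term by term, produces $(t^{(\phi,j)}, \phi_j^*(t^{-j}a))$, while applying our map to $\phi^*(a/f)$ produces a sum indexed by $\mathrm{Supp}(\phi^* f)$. The key combinatorial identity (exactly as in Lemma \ref{lem:HjG}) is that the two indexing sets $\{i : i\in \mathrm{Supp}(\phi^* f)\}$ and $\{(\phi,j) : j\in \mathrm{Supp}(f)\}$ coincide under the bijection dictated by $\phi$; this identity is independent of the actual nonzero values of $f$ and therefore works for arbitrary partitions $P$, not just $P=(1,\dots,1)$. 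This is the main obstacle, though it is essentially a bookkeeping exercise once the factorization in $\Lambda^o$ is in hand.

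For task (iii), recall that the adjoint $j_*H(P)\to H(P)$ of the identity sends $(t^s,y)$ to $t^s y$. Composing our map with the inclusion-induced $j_*G(P)\to j_*H(P)$ and then this adjoint sends $a/f$ to
\[
\sum_{j\in \mathrm{Supp}(f)} t^{j}\cdot t^{-j}a \;=\; \sum_{j\in \mathrm{Supp}(f)} a \;=\; |\mathrm{Supp}(f)|\cdot a \;=\; r\cdot a,
\]
since $|\mathrm{Supp}(f)|$ equals the length $r$ of $P$ by the very definition of $S_q(P)$. Because $k>0$ forces $r\geq 1$, the composite is multiplication by a nonzero integer, which is precisely the definition of an almost free cyclic object. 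The case $B=A$, $M=A_1$, $l=1$, $r=k$ recovers Lemma \ref{lem:ratret} as promised.
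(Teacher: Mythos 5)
Your proof is correct and follows exactly the approach the paper intends: it generalizes the argument of Lemma~\ref{lem:HjG} (well-definedness via the unique factorization $\phi^*t^j=t^{(\phi,j)}\phi_j^*$ in $\Lambda^o$) and the computation in Lemma~\ref{lem:ratret}, replacing the single nonzero value $1$ by the multiset encoded in $P$. In fact the paper does not spell out a proof of Lemma~\ref{lem:ratret2} at all -- it simply asserts that ``the conclusion holds in the more general context'' -- so your write-up supplies the omitted details; the one thing worth flagging is that the paper's displayed formula $\sum_{j\in Supp(f)}(t^{f(j)},t^{-f(j)}a)$ appears to be a misprint for $\sum_{j\in Supp(f)}(t^{j},t^{-j}a)$ (the version you use), since otherwise the terms need not land in $j_*G(P)$; your reading is the one consistent with Lemma~\ref{lem:HjG} and makes the composite literally multiplication by $r=|Supp(f)|$.
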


Eventually this leads to the lemma that decomposes relative Hochschild
homology in terms of almost free cyclic objects.

If $A\fib A/I$ is a surjection of flat (= flat in every degree) simplical rings, let $F^k(A,I)=F^k$ be the cyclic subobject of
$HH(A)$ which in degree $q$ is given by
$$F^k_q=\sum_{\sum{n_j}\geq k}\otimes_{j=0}^qI^{n_j}.$$
We get that $F^0=HH(A)$ and $F^0/F^1=HH(A/I)$.  

\begin{lemma}
  Let $A\fib A/I$ be a surjection of flat simplical rings.  Then,
  for each $k>0$ there is a sequence of surjections
  $$F^k/F^{k+1}\fib X^k(1)\fib\dots\fib X^k({p(k)})=0,$$
  where $p(k)$ is the number of partitions of $k$ and such that the kernel of
  each surjection is an almost free cyclic object.
\end{lemma}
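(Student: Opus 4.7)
The plan is to filter $F^k/F^{k+1}$ by cyclic subobjects whose successive subquotients are exactly the modules $H(P)$ produced by Lemma~\ref{lem:ratret2}, indexed by the norm ordering on partitions of $k$.

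For each partition $P$ of $k$, let $G(P)\subseteq F^k/F^{k+1}$ be the graded subgroup spanned in degree $q$ by those tensors $a_0\otimes\dots\otimes a_q$ with $a_j\in I^{n_j}/I^{n_j+1}$ for which the multiset of nonzero $n_j$, sorted in weakly decreasing order, equals $P$. Then as graded abelian groups
$$F^k/F^{k+1}=\bigoplus_{P}G(P),$$
where $P$ ranges over the $p(k)$ partitions of $k$.

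The key observation is the effect of a face map on this decomposition. Hochschild face maps multiply two cyclically adjacent tensor factors; if at least one of the multiplied factors lies in $A/I=I^0/I^1$ the partition type is unchanged, while if both lie in strictly positive degrees $k_a$ and $k_b$ the two parts of $P$ merge into a single new part $k_a+k_b$. The norm is designed so that any such merge \emph{strictly increases} $|P|$: removing one part and adjoining its size to another raises the leading significant digit of the base-$k$ expansion $|P|=k_1k^{k-1}+\dots+k_rk^{k-r}$. Enumerating the partitions of $k$ in strictly decreasing norm as $P_1>P_2>\dots>P_{p(k)}$ and setting $W_i=\bigoplus_{j\leq i}G(P_j)$ therefore produces a cyclic subobject $W_i\subseteq F^k/F^{k+1}$ for each $i$.

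Now define $X^k(i)=F^k/F^{k+1}/W_i$ for $i=0,1,\dots,p(k)$. Then $X^k(0)=F^k/F^{k+1}$, $X^k(p(k))=0$, and there are natural surjections $X^k(i-1)\fib X^k(i)$ with kernel $W_i/W_{i-1}\cong G(P_i)$. In this subquotient, every face map whose multiplication step would merge two positive-degree factors lands in $W_{i-1}$ and is therefore killed, so the induced cyclic structure is precisely that of the Hochschild complex of the square-zero extension $(A/I)\ltimes(I/I^2\oplus\dots\oplus I^k/I^{k+1})$ restricted to partition type $P_i$. This identifies $W_i/W_{i-1}$ canonically with $H(P_i)(A/I;I/I^2,\dots,I^k/I^{k+1})$ in the sense of Lemma~\ref{lem:ratret2}, which is an almost free cyclic object with multiplier the length of $P_i$, as required.

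The main (but essentially only nontrivial) obstacle is the norm inequality under merging: given $P=(k_1\geq\dots\geq k_r)$ and $P'$ obtained by merging $k_a$ and $k_b$ and re-sorting, one must verify $|P'|>|P|$. This is a finite base-$k$ arithmetic check. Either the merge produces a strictly larger leading part (raising the dominant $k^{k-1}$ coefficient), or it keeps the leading part fixed while decreasing the length by one, in which case all remaining parts shift up to strictly heavier weights $k^{k-i}$ with smaller $i$. Once this inequality is in hand, the rest of the argument is formal bookkeeping.
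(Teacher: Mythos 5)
Your proposal is correct and follows essentially the same route as the paper: reduce (implicitly) to the associated graded pair, decompose $F^k/F^{k+1}$ into partition-type summands, filter by the norm ordering so that each subquotient is a cyclic object of the form $H(P_i)$, and invoke Lemma~\ref{lem:ratret2}. The one place where your write-up is slightly imprecise is the verification that merging strictly increases the norm: the cleaner argument is that if $c=k_a+k_b$ is inserted at position $p$ after re-sorting, then positions $1,\dots,p-1$ of $P$ and $P'$ coincide while $P'$ has $c>k_p$ at position $p$, so $|P'|>|P|$ by lexicographic comparison of base-$k$ expansions; your ``leading part increases or all remaining parts shift up'' dichotomy is not quite exhaustive as stated, though it conveys the right idea. (Also note a harmless notation clash: you use $G(P)$ for the partition-type summand, whereas the paper reserves $G(P)$ for the sub-\emph{simplicial} object of $H(P)$ used to build the retraction.)
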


 \begin{proof}

There is a natural isomorphism $F^k/F^{k+1}(A,I)\cong
F^k/F^{k+1}(gr(A,I))$, where $gr(A,I)$ is the associated graded pair 
$\left(\bigoplus_{j=0}^\infty I^j/I^{j+1},\bigoplus_{j=1}^\infty I^j/I^{j+1}\right)$, and so  we
only need to worry about the graded situation, where $A=\bigoplus_{n=0}^\infty A_n$
and $I=\bigoplus_{n=1}^\infty A_n$.
We may assume  that for each $n\geq 0$ the $n$'th homogenous piece $A_n$ is (degreewise) flat.  Then $HH(A)$ splits as a sum according to total
  degree.  The piece of total degree $0$ is simply $HH(A_0)$. The group of $q$-simplices in $F^k/F^{k+1}$ is isomorphic to
$\bigoplus\bigotimes_{j=0}^\infty A_{n_j}$ where the sum is over
sequences of non-negative integers $n_0,\dots,n_q$ such that $\sum n_j=k$.

Given a partition $P=\left(k_1\geq k_2\geq\dots\geq k_r\right)$ of $k$, the group of $q$-simplices in the cyclic abelian group $H(P)(A_0;A_1,\dots A_k)$ discussed before Lemma
\ref{lem:ratret2} is a subgroup of the group of $q$-simplices in
$F^k/F^{k+1}$, but does not usually form a subcomplex as $q$ varies.
Actually, the group of $q$-simplices in $F^k/F^{k+1}$ is isomorphic to
$\bigoplus H(P)(A_0;A_1,\dots A_k)_q$, where the sum runs over all partitions $P$ of $k$, but the face maps can take summands belonging to a certain partition to a summand belonging to a smaller partition.

However, if $P_1>P_2>\dots >P_{p(k)}$ are all the partitions of $k$, we get that $H(P_1)(A_0;A_1,\dots
A_k)=H(k)(A_0,A_k)$ (in the notation of Lemma \ref{lem:ratret}) is a
cyclic subobject of $F^k/F^{k+1}$.  Let $X^k(1)$ be the quotient of $H(k)(A_0,A_k)\to F^k/F^{k+1}$, and notice
that $H(P_2)(A_0;A_1,\dots A_k)$ is a cyclic subobject.  Calling the
quotient of this inclusion $X^k(2)$, we notice that $H(P_3)(A_0;A_1,\dots A_k)$ is a
cyclic subobject, and so on, until we reach $X^k(p(k))=0$.  By Lemma
\ref{lem:ratret2}, all the kernels in the sequence of surjections
$$F^k/F^{k+1}\fib X^k(1)\fib\dots\fib X^k(p(k))=0$$ are almost free
cyclic abelian groups.
\end{proof}

\subsection{Homology and free cyclic objects}There is another view on free cyclic objects in a category $\cC$ with coproducts which is useful for some purposes.  For convenience, if $X$ is an object in $\cC$ and $S$ is a finite set, we write $X\otimes S$ for the $S$-fold coproduct of $X$ with itself.

Recall that if $I$ is a small category, $\cC$ a category with coproducts and $M\colon I^o\times I\to\cC$ we can define the (Hochschild) homology $H(I,M)$ as the simplicial object in $\cC$ whose $n$-simplices is given by $\coprod_{i_0,\dots,i_n\in I} M(i_0,i_n)\otimes I(i_1,i_0)\otimes\dots\otimes I(i_n,i_{n-1})$ with face maps given by composition and the functoriality of $M$ and degeneracies by inserting identity maps.  
If $M\colon J^o\times J\to\cC$, then $f\colon I\to J$ induces an obvious map $f\colon H(I,f^*M)\to H(J,M)$.
If $M$ factors as $N\circ pr$ where $pr$ is the projection $I^o\times
I\to I$ one most frequently refers to $H(I,M)$ as the (simplicial
replacement of the) homotopy colimit of $N$.

If $\cC$ has coequalizers we let $H_0(I,M)$ be the coequalizer of the
two face maps from the $1$-simplices to the $0$-simplices.

If $f\colon I\to J$ and $X\colon I\to \cC$ are functors, we can
identify the left Kan extension $(f_*X)(j)$ with the homology
$H_0(I,X(-)\otimes J(f(-),j))$, and 
$$ho(f_*)X=H(I,X(-)\otimes J(f(-),j))$$ is a ``homotopy left Kan
extension''.

In the particular case where $f=id\colon I=I$,
the map $$ho(id_*)X(i)=H(I,X(-)\otimes I(-,i))\to X(i)$$ given by composition has a
simplicial contraction given by inserting identities, and so we have a
homotopy version of the dual Yoneda lemma (which says that
$(id)_*X\cong X$).

Recall the inclusion $j\colon\Delta^o\subseteq\Lambda^o$.
\begin{lemma}
Let $M$ be a simplicial object in a category with finite colimits.
Then $ho(j_*)M\to j_*M$ is an objectwise simplicial homotopy
equivalence, in the sense that for each $[q]\in\Lambda^o$, the map of
simplicial objects (the target is constant)
$ho(j_*)M([q])=H(\Delta^o,M\otimes \Lambda^o(j(-),[q])))\to (j_*M)_q$ is a simplicial homotopy equivalence.
\end{lemma}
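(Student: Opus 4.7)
The plan is to reduce the statement to the homotopy Yoneda lemma stated just above it (the case $f=\mathrm{id}$), by using the well-known factorization property of cyclic morphisms. Recall that every morphism $\phi\colon j([n])\to [q]$ in $\Lambda^o$ factors uniquely as $\phi = t^s\circ \bar\phi$ with $t\in C_{q+1}$ and $\bar\phi\in\Delta^o([n],[q])$. In particular, this gives a natural bijection
\begin{equation*}
\Lambda^o(j([n]),[q]) \;\cong\; C_{q+1}\times \Delta^o([n],[q]).
\end{equation*}
This is exactly what produces the well-known formula $(j_*M)_q \cong M_q\otimes C_{q+1}$ for the strict left Kan extension.

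Next I would feed this decomposition into the homotopy Kan extension. Using that $\otimes$ distributes over coproducts of indexing sets and that $H(\Delta^o,-)$ commutes with coproducts of functors (inspect the degreewise formula), we obtain a natural isomorphism of simplicial objects in $\cC$
\begin{equation*}
ho(j_*)M([q]) \;=\; H\!\left(\Delta^o,\,M(-)\otimes \Lambda^o(j(-),[q])\right) \;\cong\; H\!\left(\Delta^o,\,M(-)\otimes\Delta^o(-,[q])\right)\otimes C_{q+1}.
\end{equation*}
Unwinding how the canonical map $ho(j_*)M([q])\to (j_*M)_q$ is defined (via composition in $\Lambda^o$), one checks that under these identifications it becomes the $C_{q+1}$-fold coproduct of the canonical map
\begin{equation*}
H\!\left(\Delta^o,\,M(-)\otimes \Delta^o(-,[q])\right)\;\to\; M([q])
\end{equation*}
on each summand.

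Now I would invoke the homotopy Yoneda observation recalled immediately before the lemma: that latter map admits a simplicial contraction (the section inserts identity morphisms; the homotopy is standard extra-degeneracy style). Hence it is a simplicial homotopy equivalence of simplicial objects in $\cC$, with explicit section and homotopy. Since simplicial homotopies in $\cC$ are preserved by taking finite coproducts (apply the homotopy on each summand), the $C_{q+1}$-fold coproduct of this map is again a simplicial homotopy equivalence, which is precisely the content of the lemma.

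The main thing to be careful about is bookkeeping: one must verify that the composition-of-morphisms map really splits as a coproduct of Yoneda maps, i.e.\ that post-composing with an element $t^s\in C_{q+1}$ does not mix $\Delta^o$-components of $\Lambda^o(j(-),[q])$. This is exactly the unique factorization property recalled above, so there is no genuine obstacle -- the proof is essentially a translation of the strict Kan extension argument into the homotopy-coherent setting. No rationalization or $\pi_0$-finiteness issues appear, since everything takes place at the level of simplicial objects in $\cC$.
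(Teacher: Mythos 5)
Your proof is correct and follows essentially the same route as the paper: both use the unique factorization of cyclic morphisms to reduce the statement to the homotopy dual Yoneda lemma ($ho(\mathrm{id}_*)M \to M$ admitting a simplicial contraction). The only cosmetic difference is that you assert a full simplicial isomorphism $ho(j_*)M([q]) \cong H(\Delta^o, M\otimes\Delta^o(-,[q]))\otimes C_{q+1}$ (which is true, and your naturality check via unique factorization is the right one), whereas the paper only records the weaker but sufficient fact that the map in question is a retract of the $C_{q+1}$-indexed coproduct of Yoneda maps.
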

\begin{proof}
The canonical factorization in $\Lambda$, \cite[6.1.8]{Loday}, gives rise to a factorization of the identity $$\Lambda([s],[t])\cong \Delta([s],[t])\times Aut_{\Lambda}([t])\to Aut_{\Lambda}([s])\times \Delta([s],[t])\to \Lambda([s],[t]),$$ where the latter function is the composition in $\Lambda$.  By uniqueness of the factorization this can be promoted to a split monomorphism
$$H(\Delta^o,M\otimes \Lambda^o(j(-),[q])))\to H(\Delta^o,M\otimes \Delta^o(-,[q])))\otimes Aut_{\Lambda}([q]),$$
which induces the isomorphism $(j_*M)([q])\cong M_q\otimes
Aut_{\Lambda}([q])$ discussed earlier on the zeroth homology.  
In effect the map $ho(j_*)M([q])\to (j_*M)_q$ becomes a retract of
$ho(id_*)M([q])\otimes Aut_\Lambda([q])\to (id_*)M([q])\otimes
Aut_\Lambda([q])\cong M([q])\otimes Aut_\Lambda([q])$ which is a
simplicial homotopy equivalence by the homotopical dual Yoneda lemma.
\end{proof}

As an example, if $M$ is a cyclic module, \ie a functor $\Lambda^o\to\ab$, then $HC(M)=H(\Lambda^o,M)$ and $HH(M)=H(\Delta^o,j^*M)\simeq j^*M$, and $j\colon\Delta\to\Lambda$ induces a map $HH(M)\to HC(M)$.  In the special case of a free cyclic module one has
\begin{lemma}\label{lem:HHtoHCforfree}
  Let $M$ be a simplicial abelian group.  Then the map $HH(j_*M)\to HC(j_*M)$ is a split surjection in the homotopy category.
\end{lemma}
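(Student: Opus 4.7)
The plan is to exploit the fact that both $HH$ and $HC$ can be expressed as homotopy colimits over $\Lambda^o$, after which the statement reduces to a triangle identity for the adjunction $j_*\dashv j^*$.

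The first step is to identify, naturally in a cyclic abelian group $N$, the map $HH(N)\to HC(N)$ with the map $H(\Lambda^o,j_* j^* N)\to H(\Lambda^o, N)$ induced by the counit $\epsilon_N\colon j_* j^* N\to N$ of the adjunction. The natural equivalence $H(\Delta^o, j^* N)\simeq H(\Lambda^o, j_* j^* N)$ should follow by applying the previous lemma to the simplicial abelian group $j^* N$: that lemma gives an objectwise simplicial homotopy equivalence $ho(j_*)j^* N\to j_* j^* N$, and feeding this into $H(\Lambda^o,-)$, swapping the resulting double homotopy colimit by a Fubini manipulation, and using that for each $[s]\in\Delta^o$ the category indexing $H(\Lambda^o,\Lambda^o(j[s],-))$ is contractible (it has the initial object $(j[s],\mathrm{id})$), yields the identification $H(\Lambda^o,j_* j^* N)\simeq H(\Delta^o, j^* N)=HH(N)$, compatibly with $H(\Lambda^o,\epsilon_N)$ on the one side and the canonical map $HH(N)\to HC(N)$ on the other.

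Once this reformulation is in place, the second step is immediate. Specialising to $N=j_* M$, the triangle identity
$$\epsilon_{j_* M}\circ j_*\eta_M = \mathrm{id}_{j_* M}$$
for the unit $\eta_M\colon M\to j^* j_* M$ exhibits $j_*\eta_M\colon j_*M\to j_*j^*j_*M$ as a section of $\epsilon_{j_* M}$ already at the level of cyclic abelian groups. Applying $H(\Lambda^o,-)$ and transporting along the equivalence from the first step then yields a homotopy section $HC(j_* M)\to HH(j_* M)$ of the canonical map, which is exactly what is claimed.

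The main obstacle I foresee is the first step, namely the Fubini-type manipulation needed to identify $HH(N)\simeq H(\Lambda^o, j_* j^* N)$ naturally in $N$ and compatibly with the canonical map $HH\to HC$; once the requisite manipulations of iterated homotopy colimits are available in the paper's chosen framework, the rest of the argument collapses to a single appeal to a triangle identity and does not use anything specific about $M$.
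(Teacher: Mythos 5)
Your argument is correct, and it takes a genuinely different route to the splitting than the paper's proof, even though both arguments rest on the same core identification. In the paper, the key move is to construct the map $F\colon H(I,X)\to H(J,ho(f_*)X)$ as the composite of $\iota\colon H(I,X)\to H(I,f^*ho(f_*)X)$ (induced by the canonical inclusion $X\to f^*ho(f_*)X$) with the $f$-induced map $\phi$, and then to exhibit explicit simplicial homotopies showing $F\simeq G$, where $G$ is the composite of a degeneracy with the reversal-of-priorities isomorphism and is visibly a simplicial homotopy equivalence. The splitting of $\phi$ is then automatic, because $\phi\circ\iota=F$ is an equivalence; no mention of the adjunction counit is made. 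That reversal-of-priorities step is exactly your ``Fubini'' identification $H(\Delta^o,j^*N)\simeq H(\Lambda^o,ho(j_*)j^*N)$, and both proofs likewise appeal to the preceding lemma that $ho(j_*)\to j_*$ is an objectwise simplicial homotopy equivalence. Where you diverge is at the end: you reinterpret $\phi$, under this equivalence, as $H(\Lambda^o,\epsilon_N)$ for the counit $\epsilon_N\colon j_*j^*N\to N$, and then, specialising to $N=j_*M$, read off a section directly from the triangle identity, so that the splitting $j_*\eta_M$ is an honest map of cyclic abelian groups rather than a zig-zag manufactured from an inverse in the homotopy category. The price you pay is the compatibility you flag in your last paragraph: that the $j$-induced map $HH(N)\to HC(N)$ really is identified with $H(\Lambda^o,\epsilon_N)$ under the Fubini equivalence. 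That compatibility is correct --- the natural transformation $ho(j_*)j^*N\to j_*j^*N\to N$ is the derived counit, and the swap of priorities together with contractibility of $H(\Lambda^o,\Lambda^o(j[s],-))$ is exactly what exhibits $H(\Delta^o,j^*N)$ as $H(\Lambda^o,ho(j_*)j^*N)$ over it --- but establishing it carefully is, in the paper's hands-on framework, essentially the same labour as the chain of simplicial homotopies $F\simeq G$. So your proof is a legitimate and slightly more conceptual repackaging; it isolates the adjunction triangle identity as the reason for the splitting, while the paper buries that observation inside the equivalence $F\simeq G$.
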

\begin{proof}
We will prove that the corresponding statement is always true for the homotopy Kan extension.  As we have seen, the homotopy and categorical notions coincide up to homotopy for $j\colon\Delta^o\to\Lambda^o$, so this proves the result.

  Consider the general situation $f\colon I\to J$ and $X\colon I\to \cC$.
We prove that the map 
$$H(I,f^*ho(f_*)X)\to H(J,ho(f_*)X)$$ induced by $f$ is a split epimorphism modulo simplicial homotopy.

Consider the inclusion 
$$X(i)\to f^*ho(f_*)X(i)_n=
\coprod_{i_0\gets\dots\gets i_n,\, f(i_n)\gets f(i)}X(i_n)$$ onto the $i=\dots=i,\, f(i)=f(i)$ summand.  This gives a natural transformation $X\to f^*ho(f_*)X$.
Precomposing the map we want to show is a split epimorphism with
$H(I,X)\to H(I,f^*ho(f_*)X)$ gives us a map $F\colon H(I,X)\to
H(J,ho(f_*)X)$.  The claim will therefore follow once we show that $F$
is simplicially homotopic to a
simplicial homotopy equivalence $G$.

Now, $F$ sends $a=x\otimes(i_0\gets\dots\gets i_n)$ to $F(a)=((x\otimes 1)\otimes (i_n=\dots=i_n))\otimes (f(i_0)\gets\dots\gets f(i_n))$.  Letting $k$ vary from $0$ to $n$, the assignments sending $a$ to $((x\otimes 1)\otimes (i_k=\dots=i_{k}\gets\dots\gets i_n))\otimes (f(i_0)\gets\dots\gets f(i_k)=\dots= f(i_k))$ assemble to a simplicial homotopy between $F$ and $G$, where $G(a)=((x\otimes 1)\otimes (i_0\gets\dots\gets i_n))\otimes (f(i_0)=\dots =f(i_0))$.   

The inclusion $X(i)\to H(J,X(i)\otimes J(f(i'),-))_n=\coprod_{j_0\gets\dots\gets j_n,\, j_n\gets f(i')}X(i)$ onto the $f(i')=\dots=f(i'),\,f(i')=f(i')$ summand gives a natural transformation.
  The map $G$ is a composite
$$H(I,X)\to H(I,(i',i)\mapsto H(J,X(i)\otimes J(f(i'),-)))\cong H(J,H(I,X\otimes J(f(-),-))),$$
where the first map is induced by the degeneracy $X(i)\to H(J,X(i)\otimes J(f(i'),-))$ (which is a simplicial homotopy equivalence) and the isomorphism is simply reversal of priorities.

The lemma is the special case where $I=\Delta^o$, $J=\Lambda^o$, $X=M$ and $f=j\colon I\to J$.
\end{proof}

\subsection{Periodic cyclic homology}
\label{sec:HP}

In order to fix notation and for reference we
recall the construction of (periodic) cyclic homology, see for instance \cite{Loday} for more details.  Let
$M\colon\Lambda^o\to\ab$ be a cyclic abelian group, and define the
periodic bicomplex $CP(M)$
$$
\begin{CD}
@.@.@.@.\\
@.@VVV@VVV@VVV@.\\
  \dots @<{1+t}<< M_3@<{1-t+t^2-t^3}<<M_3@<{1+t}<<M_2@<{1-t+t^2-t^3}<<\dots\\
  @.@V{-d_2+d_1-d_0}VV@V{d_0-d_1+d_2-d_3}VV@V{-d_2+d_1-d_0}VV@.\\
  \dots@<{1-t}<<M_2@<{1+t+t^2}<<M_2@<{1-t}<<M_2@<{1+t+t^2}<<\dots\\
  @.@V{d_1-d_0}VV@V{d_0-d_1+d_2}VV@V{d_1-d_0}VV@.\\
  \dots@<{1+t}<<M_1@<{1-t}<<M_1@<{1+t}<<M_1@<{1-t}<<\dots\\
  @.@V{-d_0}VV@V{d_0-d_1}VV@V{-d_0}VV@.\\
  \dots@<{1-t=0}<<M_0@=M_0@<{1-t=0}<<M_0@=\dots
\end{CD}
$$
repeated indefinitely in both horizontal directions, with the middle column (which is the Moore complex of the simplicial
abelian group underlying $M$) in degree $0$. The odd columns are acyclic. Notice that the rows are
acyclic when $M$ is rational.

The homology groups of the zero'th column are referred to as {\em Hochschild homology} $HH_*(M)$, and are naturally isomorphic to the
homotopy groups
$\pi_*(j^*M)$ where $j^*$ is precomposition with $j\colon\Delta\to\Lambda$, see the previous section.  

The homology of the
total complex consisting of the non-negative columns only is referred to as
{\em cyclic homology}, $HC_*(M)$, and can alternatively be calculated as the
homotopy groups of $\hocolim[\Lambda^o]M=H(\Lambda^o,M)$.

\begin{Def}\label{Def:HPHN}
The {\em periodic homology} $HP_*(M)$ of $M$ is the homology of the total complex
$\{n\mapsto\prod_{r+s}CP_{(r,s)=n}(M)\}$.  {\em Negative cyclic
  homology} $HC^-(M)$  is
the homology of the total complex of the sub bicomplex
$CC^-(M)\subseteq CP(M)$ concentrated in non-positive degrees.  
\end{Def}
We get
canonical isomorphisms $HC_{*-2}(M)\cong H_*(CP(M)/CC^-(M))$ and 
long exact sequences 
$$\minCDarrowwidth0cm
\begin{CD}
  \dots@>>>HC_{n-1}(M)@>>>HC^-_{n}(M)@>>>HP_n(M)@>>>HC_{n-2}(M)@>>>\dots\\
  @.@|@VVV@VVV@|@.\\
  \dots@>>>HC_{n-1}(M)@>B>>HH_{n}(M)@>>>HC_n(M)@>S>>HC_{n-2}(M)@>>>\dots
\end{CD}.
$$

\begin{lemma}\label{lem:HPfreevanish}
  If $N$ is a simplicial abelian group, then $HP(j_*N)=0$, and so
  $HC_{n-1}(j_*N)\to HC^-_{n}(j_*N)$ is an isomorphism for all $n$.
\end{lemma}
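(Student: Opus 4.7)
The plan is to reduce everything to the vanishing $HP_n(j_*N)=0$ for all $n$; the second assertion of the lemma then follows immediately from the long exact sequence
\[
\cdots\to HP_{n+1}(j_*N)\to HC_{n-1}(j_*N)\to HC^-_n(j_*N)\to HP_n(j_*N)\to\cdots
\]
of Definition \ref{Def:HPHN}, since $HP_n=HP_{n+1}=0$ forces $HC_{n-1}(j_*N)\to HC^-_n(j_*N)$ to be an isomorphism.

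First I would invoke Lemma \ref{lem:HHtoHCforfree}: it provides a section in the homotopy category of the natural map $HH(j_*N)\to HC(j_*N)$, so the induced map $I\colon HH_n(j_*N)\to HC_n(j_*N)$ is split surjective for every $n$. Substituting this into the Connes SBI long exact sequence
\[
\cdots\to HC_{n-1}(j_*N)\xrightarrow{B}HH_n(j_*N)\xrightarrow{I}HC_n(j_*N)\xrightarrow{S}HC_{n-2}(j_*N)\to\cdots
\]
and using $\im(I)=\ker(S)$ forces the Connes periodicity operator $S$ to vanish on $HC_*(j_*N)$.

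Next I would present $HP(j_*N)$ as a homotopy inverse limit along $S$. Filter $CP(j_*N)$ by the decreasing column subcomplexes $F^{k}CP(j_*N)$ consisting of all columns of index $\le -2k$; with the product totalisation the canonical map $CP(j_*N)\to\varprojlim_k CP(j_*N)/F^{k}CP(j_*N)$ is an isomorphism, each quotient $CP(j_*N)/F^{k}CP(j_*N)$ is quasi-isomorphic to an appropriate shift of the standard cyclic bicomplex $CC(j_*N)$ (the intervening odd columns being acyclic), and the tower's transition maps induce the Connes operator $S$ on homology. The Milnor short exact sequence for a tower of surjections then takes the form
\[
0\to\varprojlim{}^1_k HC_{n+2k-1}(j_*N)\to HP_n(j_*N)\to\varprojlim_k HC_{n+2k-2}(j_*N)\to 0,
\]
with limits taken along $S$. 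Since $S=0$, both $\varprojlim$ and $\varprojlim{}^1$ of a tower with zero transition maps are trivially zero, and hence $HP_n(j_*N)=0$.

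The main technical hurdle will be the bookkeeping in the second step: correctly identifying each truncation $CP/F^{k}$ with an appropriate shift of $CC$ and verifying that the tower's transition maps really do induce $S$ rather than some related operator. Once that is in place, the collapse of the Milnor sequence in the presence of $S=0$ is automatic, and the rest of the argument is purely formal.
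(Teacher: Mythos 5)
Your proposal is correct and follows essentially the same argument as the paper: split surjectivity of $HH(j_*N)\to HC(j_*N)$ from Lemma \ref{lem:HHtoHCforfree} forces $S=0$, the column filtration of $CP$ gives a $\lim$--$\lim^1$ sequence along $S$ which then collapses, and the second assertion follows from the long exact sequence of Definition \ref{Def:HPHN}. Your version merely spells out the filtration bookkeeping more explicitly and makes the final deduction about $HC_{n-1}\to HC^-_n$ explicit, both of which the paper leaves implicit.
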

\begin{proof}
  The map $HH_n(j_*N)\to HC_n(j_*N)$ is split surjective by Lemma \ref{lem:HHtoHCforfree}.
  Hence the map $S\colon HC_n(j_*N)\to HC_{n-2}(j_*N)$ is zero.
Filtering $CP(M)$ by columns, we get 
the short exact sequence
$$0\to\Lim[S]{}^{1}HC_{n-2k+1}(M)\to HP_n(M)\to \Lim[S]HC_{n-2k}(M)\to
0,$$
and so $HC_*(j_*N)=0$.
\end{proof}

\subsection{Consequences for functors vanishing on almost free cyclic objects}
\label{sec:cons-tate-peri}
The fact \ref{lem:HPfreevanish} that periodic homology vanishes on free cyclic objects, and
the retracts of Lemma \ref{lem:ratret} lead to a sequence of important
results.  

Recall the following result by Goodwillie from \cite[p. 356]{Grel}.
We repeat it here since we need extra information which is obvious
from Goodwillie's proof, but not stated as part of his result.

\begin{lemma}\label{lem:good}
  Suppose $I\subseteq A$ is a (k-1)-connected ideal in a simplicial
  ring.  Then there exist a degreewise free simplicial ring $F$ and a
  $k$-reduced (\ie $J_q=0$ for
$q< k$) ideal $J\subseteq F$ generated in each degree by
  generators of $F$, and an equivalence of surjections of simplicial rings
$$
\begin{CD}
  F@>>> F/J\\@V{\simeq}VV@V{\simeq}VV\\A@>>> A/I
\end{CD}.
$$
\end{lemma}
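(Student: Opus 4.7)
The plan is to construct $F$ in two stages, arranging the attachments so that every generator that contributes to $J$ lies in simplicial degree $\geq k$.

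Stage one: build a degreewise free simplicial resolution $F'\twoheadrightarrow A/I$ by the standard step-by-step procedure. Start with the free simplicial ring on a pointed simplicial set surjecting degreewise onto $A/I$, and at each subsequent stage $n$ attach new free generators in degree $n$ (together with their degeneracies) to kill unwanted classes of $\pi_n$. Degreewise freeness of $F'$ together with surjectivity of $A\twoheadrightarrow A/I$ lets one lift the weak equivalence $F'\to A/I$ to a map $F'\to A$. Comparing $F'\to A\to A/I$ with the long exact sequence of $I\to A\to A/I$ and using the $(k-1)$-connectivity of $I$, the composite $\pi_n F'\to\pi_n A$ is already an isomorphism for $n<k$.

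Stage two: extend $F'$ to $F=F'\langle y_\alpha\rangle$ by appending further free generators $y_\alpha$, each chosen to repair the remaining discrepancy between $F'\to A$ and a weak equivalence. Concretely, for $n\geq k$ one inductively attaches $y_\alpha$'s to hit the cokernel of $\pi_n F'\to \pi_n A$ (each such $y_\alpha$ being a cycle mapping to a chosen lift of a class in the image of $\pi_n I\to \pi_n A$) and then to kill the kernel of $\pi_n F\to \pi_n A$ in degree $n+1$. Declare $J$ to be the ideal of $F$ generated by these $y_\alpha$. Since stage one already secures $\pi_{<k} F'\cong \pi_{<k} A$, no stage-two generator is ever forced into a degree below $k$, so $J_q=0$ for $q<k$; by construction $F/J=F'\we A/I$; and a diagram chase comparing the long exact sequences of $(F,J)$ and $(A,I)$ yields $F\we A$.

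The principal obstacle is ensuring that $J$ is generated in each degree by a subset of the chosen free generators of $F$, rather than merely by arbitrary elements that happen to map to zero in $A/I$. This is handled by the two-stage architecture: the stage-two generators are declared by fiat to lie in $J$, and the quotient $F/J$ then coincides with the stage-one ring $F'$ on the nose, with no contamination of $F/J\to A/I$. The extra information beyond Goodwillie's original statement—that $J$ is generated in each degree by free generators of $F$—is then immediate from the construction.
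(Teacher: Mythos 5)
The paper itself does not prove this lemma; it is cited verbatim from Goodwillie's \emph{Relative algebraic K-theory and cyclic homology}, with a remark that the extra clause about $J$ being generated degreewise by free generators is ``obvious from Goodwillie's proof.'' So there is nothing in the paper to compare against line by line; what can be assessed is whether your argument is sound.

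Your two-stage plan is a reasonable sketch, and the cokernel step is handled correctly (those $y_\alpha$ are attached as \emph{cycles} landing in $I$, so their faces vanish and they cause no trouble). But there are two genuine gaps, and the second one undercuts exactly the extra information the paper needs.

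\textbf{The lift.} The assertion that degreewise freeness of $F'$ plus surjectivity of $A\to A/I$ gives a lift $F'\to A$ is not correct as stated: $A\to A/I$ is a fibration but not an acyclic fibration, so there is no model-categorical lifting, and lifting a cell of $F'$ in simplicial degree $q$ across $A\twoheadrightarrow A/I$ has an obstruction living in $\pi_{q-1}(I)$. Under the hypothesis this obstruction vanishes only for $q\le k$. One can repair this by building $F'$ together with the lift to $A$ simultaneously (choosing images in $A$ first and pushing them down to $A/I$), but this is a different argument than what you wrote, and it is precisely here that the $(k-1)$-connectivity of $I$ has to enter.

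\textbf{The kernel-killing cells and the structure of $J$.} This is the real problem. When you attach a generator $y$ in degree $n+1$ to kill a kernel class of $\pi_n F\to\pi_n A$, the attachment forces a nontrivial face $d_0y=z$ for some cycle $z\in F_n$. A simplicial ideal is closed under faces, so $z\in J_n$. But $z$ is a cycle built from stage-one (and stage-two) material and there is no reason for it to lie in the ideal generated by the chosen free generators in degree $n$; so the conclusion ``$J$ is generated in each degree by generators of $F$'' does not follow from ``declare $J$ to be the ideal generated by the $y_\alpha$.'' Worse, because $z\in J$, the identity $F/J = F'$ breaks: passing to $F/J$ also kills $z$ inside $F'$. (It is true that $[z]=0$ in $\pi_n F'$, so $z$ is a boundary there, but the \emph{ideal} generated by $z$ can be large, and the claim $F/J\cong F'$ is simply not ``by construction.'') You would need to arrange, at each stage, that the boundary of any kernel-killing cell you place in $J$ already lies in the ideal generated by previously chosen $J$-generators — for instance by first adjoining extra cycle generators in $J$ so that the representative $z$ can be rewritten as an element of $J_n$ before you kill it — and correspondingly separate the kernel-killing cells that stay outside $J$ from those that go into $J$. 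As written, the argument asserts the conclusion rather than deriving it, and the extra clause about $J$'s generators, which is the very content being emphasized over Goodwillie's published statement, is left unproved.
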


The conditions on the functor $X$ in the following proposition are satisfied for
the Eilenberg-MacLane spectrum associated with periodic homology of
rational algebras, and
so the statement \ref{prop:HPitem1} in Proposition \ref{prop:HP} about
nilpotent extensions follows.
\begin{prop}\label{prop:Xvanish}
  Let $X$ be a pointed homotopy functor
  from cyclic simplicial abelian groups to spectra
  satisfying the homotopy properties
\begin{enumerate}
\item $X$ preserves finite homotopy limits,
\item if $\dots\to F^3\to F^2\to F^1$ is a sequence of cyclic simplicial abelian
  groups such that the connectivity of $F^n$ goes to infinity as $n$
  goes to infinity, then $\holim[n]X(F^n)\simeq *$, and
\item $X$ vanishes on almost free cyclic objects.
\end{enumerate}
  Assume that $A\to B$ is a map of simplicial rings and (at least) one of the
  following conditions are met: 
  \begin{enumerate}
  \item \label{prop:Xvanishitem1} $A\to B$ is a surjection of flat simplicial rings with
    nilpotent kernel.
  \item \label{prop:Xvanishitem2} $A\to B$ is a $1$-connected map simplicial rings.
  \end{enumerate}
Then
$$X\,HH(A)\to X\,HH(B)$$
is an equivalence.
\end{prop}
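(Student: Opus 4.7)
The plan is to identify the homotopy fibre of $HH(A)\to HH(B)$ with the cyclic subobject $F^{1}=F^{1}(A,I)$, where $I=\ker(A\to B)$, and to use the $I$-adic filtration together with the three hypotheses on $X$ to prove $X(F^{1})\simeq *$; by hypothesis (1) this is exactly what the proposition asserts. (After applying Lemma \ref{lem:good} in case \ref{prop:Xvanishitem2}, one may assume in either case that $A\to B$ is a genuine surjection, so that $F^{1}$ really is the kernel.)

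The first step is to establish $X(F^{k}/F^{k+1})\simeq *$ for every $k\geq 1$. The preceding lemma provides a finite ladder $F^{k}/F^{k+1}\fib X^{k}(1)\fib\cdots\fib X^{k}(p(k))=0$ whose successive kernels are almost free cyclic objects; hypothesis (3) annihilates $X$ on each such kernel, hypothesis (1) turns the ladder into a tower of $X$-fibrations, and descending induction gives the vanishing. A second induction using the short exact sequences $F^{n}/F^{n+1}\to F^{1}/F^{n+1}\to F^{1}/F^{n}$ propagates this to $X(F^{1}/F^{n})\simeq *$ for every $n\geq 1$. Dually, each transition $X(F^{n+1})\to X(F^{n})$ is an equivalence, so $X(F^{1})\simeq X(F^{n})\simeq \holim_{n}X(F^{n})$ for every $n$.

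In case \ref{prop:Xvanishitem1} (nilpotent kernel) the proof closes immediately: $I^{N}=0$ forces $F^{N}=0$, so $X(F^{1})\simeq X(F^{N})=*$. In case \ref{prop:Xvanishitem2} ($1$-connected map), Lemma \ref{lem:good} replaces $A\to B$ by a surjection $F\to F/J$ with $F$ degreewise free and $J$ being $k$-reduced for some $k\geq 1$. What remains is to show $\mathrm{conn}(F^{n})\to\infty$, which activates hypothesis (2) to yield $\holim_{n}X(F^{n})\simeq *$ and hence $X(F^{1})\simeq *$. Using that $J$ is generated degreewise by ring generators of the free $F$, an Eilenberg--Zilber / K\"unneth analysis identifies $J^{m}/J^{m+1}$ with a tensor power of a $k$-reduced simplicial abelian group, giving $\mathrm{conn}(J^{m}/J^{m+1})\geq mk-1$; descending through the short exact sequences $J^{m+1}\to J^{m}\to J^{m}/J^{m+1}$ then propagates this to $\mathrm{conn}(J^{m})\geq mk-1$, so each summand $\bigotimes_{j}J^{n_{j}}$ of $F^{n}_{q}$ (with $\sum n_{j}\geq n$) is at least $(nk-1)$-connected, and $\mathrm{conn}(F^{n})\to\infty$.

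The main obstacle is this connectivity estimate. Viewed abstractly, $J^{m}$ is only visibly $(k-1)$-connected, which is insufficient to trigger hypothesis (2); the key is that Goodwillie's free model endows $J^{m}$ with connectivity growing linearly in $m$, and this is precisely what makes the tower $\{F^{n}\}$ satisfy the hypothesis and close the argument.
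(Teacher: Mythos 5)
Your global strategy — filter $HH(A)$ by the $I$-adic filtration $F^{\bullet}$, use the preceding lemma to conclude $X(F^{k}/F^{k+1})\simeq *$ for $k>0$ via hypotheses (1) and (3), deduce $X(F^{1})\simeq\holim_{n}X(F^{n})$, and close with hypothesis (2) — is exactly the paper's framework. But both closing steps have errors.

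In case \ref{prop:Xvanishitem1} the claim ``$I^{N}=0$ forces $F^{N}=0$'' is false. With $I^{N}=0$ one only has $I^{n_{j}}=0$ when some $n_{j}\geq N$; for $q$ large enough, $F^{N}_{q}=\sum_{\sum n_{j}\geq N}\bigotimes I^{n_{j}}$ still contains nonzero summands where every $n_{j}\in\{0,1\}$ and $\sum n_{j}=N$. What \emph{is} true, and what the paper uses, is that $F^{k}_{q}=0$ once $k\geq N(q+1)$, so the connectivity of $F^{k}$ tends to infinity; the argument must then pass through hypothesis (2), not terminate at a literally zero stage. This is a small but real misstep and is easy to repair.

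In case \ref{prop:Xvanishitem2} you take a genuinely different route from the paper, and the gap is more serious. You apply Lemma \ref{lem:good} once to get a $1$-reduced ideal $J$ in a degreewise-free $F$ and then try to show $\operatorname{conn}(F^{n}(F,J))\to\infty$ directly. Two problems: (a) the identification of $J^{m}/J^{m+1}$ with ``a tensor power of a $k$-reduced simplicial abelian group'' is not available — the degree-$q$ generating subset $T_{q}\subseteq S_{q}$ supplied by Goodwillie's lemma is not asserted to be a simplicial subset, so the degreewise K\"unneth decomposition of $(J^{m}/J^{m+1})_{q}$ into tensors of $(F/J)_{q}$ and $\mathbb{Z}T_{q}$ need not be simplicially coherent, and the Eilenberg--Zilber boost in connectivity does not apply; (b) even granting a connectivity estimate for the associated graded pieces, the ``descending'' induction through $0\to J^{m+1}\to J^{m}\to J^{m}/J^{m+1}\to 0$ has no base case — you would need to know $J^{M}$ is highly connected for $M\gg 0$ to get it for smaller $M$, but that is precisely the statement to be proved, and without $J$-adic separatedness there is no way to start. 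The paper's proof circumvents exactly this difficulty by \emph{iterating} Lemma \ref{lem:good}: it constructs a tower of pairs $(A(n),I(n))$ with $A(n+1)\simeq A(n)$ and $I(n+1)\simeq I(n)^{2}$ rechosen to be $(n+1)$-reduced, so that the fibers $F(n)$ of $HH(A(n))\to HH(A(n)/I(n))$ are $(n-1)$-connected by construction; it then needs the already-proved case \ref{prop:Xvanishitem1} to show $X(F(n+1))\to X(F(n))$ is an equivalence (since $A(n)/I(n)^{2}\to A(n)/I(n)$ is a square-zero extension), and hypothesis (2) finishes. Your proposal skips both the recursive cofibrant replacement and the logical dependence of case \ref{prop:Xvanishitem2} on case \ref{prop:Xvanishitem1}, and as written the needed connectivity statement is not established.
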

\begin{proof}
First, assume that $A\to B$ is a surjection of flat rings with
kernel $I$ satisfying $I^n=0$.
Recall the filtration of $HH(A)$ given just before Lemma
\ref{lem:ratret2}. 
 Let $F^k(A,I)=F^k$ be the simplicial subcomplex of
$HH(A)$ which in degree $q$ is given by
$F^k_q=\sum_{\sum{n_j}\geq k}\otimes_{j=0}^qI^{n_j}.$  From Lemma
\ref{lem:ratret2} and the conditions on $X$ we get that
$X(F^k/F^{k+1})\simeq *$ for all $k>0$, and so $X(F^1)\simeq
X(F^2)\simeq\dots\simeq\holim[k]X(F^k)
$.  Hence,
in order to prove that $X\,HH(A)\to X\,HH(B)$ is an equivalence, we
only need to show that 
the connectivity of $F^k$ grows to infintiy with $k$,
which follows since $F^k(A,I)_q=0$
for $k\geq n(q+1)$.

Now, let $A\to B$ be a $1$-connected map.  Since $X$ is a homotopy functor
one may assume that the map is a surjection of flat simplicial
rings and by Lemma \ref{lem:good} that the kernel $I$ is
$1$-reduced (that is, the group of zero simplices is trivial:
$I_0=0$).  Let $A(1)=A$ and $I(1)=I$.  We will construct a sequence of
ring-ideal pairs 
$$\dots\to (A(n),I(n))
\to\dots\to
(A(2),I(2))\to (A(1),I(1))$$ such that for each $n$ the following is true
\begin{enumerate}
\item for each $[q]\in\Delta^o$ the ring $A(n)_q$ is free and the
  ideal $I(n)_q$ is
generated as an ideal by generators of $A(n)_q$
\item the map $A(n+1)\to A(n)$ is an equivalence and $I(n+1)\to I(n)$
  factors as $I(n+1)\to I(n)^2\subseteq I(n)$ with the first map an equivalence, and
\item $I(n)$ is $n$-reduced.
\end{enumerate}
 Assuming
that for given $n$ the pair $(A(n),I(n))$ is already constructed, we consider
$I(n)^2$.  Since $I(n)_q$ is generated by generators of $A(n)_q$, both
$A(n)/I(n)$ and $A(n)/I(n)^2$ are degreewise flat.  Since $I(n)$ is $n$-reduced, the short exact
sequence 
$$
\begin{CD}
  0\to\ker\{\text{mult.}\}\to I(n)\otimes I(n)@>{\text{mult.}}>> I(n)^2\to 0
\end{CD}
$$ gives that
$I(n)^2$ is $n$-connected, and we let the equivalence $(A(n+1),I(n+1))\to
(A(n),I(n)^2)$ be the result of Lemma \ref{lem:good}, replacing an
$n$-connected ideal by an $n+1$-reduced ones.  

Since $I(n)$ is
$n$-reduced, the homotopy fiber $F(n)$ of $$HH(A(n))\to HH(A(n)/I(n))$$
is $n-1$-connected.   Letting $G(n)$ be the homotopy fiber of
$$HH(A(n))\to HH(A(n)/I(n)^2)$$ we see that $F(n+1)\to F(n)$ factors as
$F(n+1)\we G(n)\to F(n)$.  By the first part of the proposition (regarding
nilpotent extensions), the map $X(G(n))\to X(F(n))$
is an equivalence.    Consequently
the homotopy fiber $X(F(1))$ of $X\,HH(A)\to X\,HH(A/I)$ is equivalent to
$\holim[n] X\,F(n)$, and as the connectivity
of $F(n)$ grows to infinity with $n$, our assumptions about the
functor $X$ implies that $\holim[n] X\,F(n)$ is contractible.
\end{proof}

\begin{Def}
  A {\em split square} of simplicial
rings is a categorically cartesian square of simplicial
flat rings, where all maps are split surjective. 
\end{Def}  
If $\A$ is a commutative square of simplicial flat rings and split surjections, set $A^{12}=I(0)$, $I(1)=\ker\{f^1\}$
and $I(2)=\ker\{f^2\}$.  
That the square is categorically cartesian is then the same as the condition that
the intersection $I(1)\cap I(2)$ is trivial.

In this situation, the iterated fiber of $HH(\A)$ is, via distributivity, isomorphic to the
cyclic abelian group with $q$-simplices
$$\bigoplus_{f}\bigotimes_{i=0}^qI(f(i))$$
where the sum is over all functions $f\colon\Z/(q+1)\to\Z/3$ (not necessarily linear) with both
$f^{-1}(1)$ and $f^{-1}(2)$ non-empty.
\begin{Def}\label{Def:Af}
  Given a function
$f\colon\Z/(q+1)\to\Z/3$, let $A_f$ be the set consisting of the $j$
  in $\Z/(q+1)$ such
that $f(j)=2$ and such that there is an $i$ 
with $f(i)=1$ and such that all intermediate values of $f$ (in cyclic
ordering from $i$ to $j$) are $0$.  
\end{Def}
\begin{ex}
  If $f,g\colon\Z/11\to\Z/3$ have values
$$
\begin{tabular}{c|llllllllllc}
  $n$   &$0$&$1$&$2$&$3$&$4$&$5$&$6$&$7$&$8$&$9$&$10$\\\hline
  $f(n)$&$2$&$2$&$0$&$1$&$2$&$1$&$1$&$0$&$2$&$0$&$1$\\
  $g(n)$&$2$&$2$&$0$&$1$&$2$&$1$&$1$&$0$&$2$&$0$&$2$
\end{tabular}
$$
then $A_f=\{0,4,8\}$ and $A_g=\{4,8\}$
\end{ex}

\begin{lemma}\label{lem:XPA}
 For a simplicial ring $A$, let $P(A)=HH(A)_{(0)}$ or $P(A)=HH(A)\prof_{(0)}$.
  Let $X$ be a homotopy functor from cyclic groups to spectra,
  preserving homotopy limits and vanishing on free cyclic objects.
  Then $XP(\A)$ is cartesian, where $\A$ is a cartesian square of
  simplicial rings and $0$-connected maps.
\end{lemma}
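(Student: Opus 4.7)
The plan is to reduce to a split square of flat simplicial rings, exhibit the iterated homotopy fiber of $HH(\A)$ as a cyclic abelian group, filter it so that the subquotients are almost free cyclic objects, and then appeal to the vanishing of $XP$ on almost free cyclic objects together with a connectivity estimate on the filtration tails.

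Since $X$ is a homotopy functor and both versions of $P$ preserve equivalences, I may replace $\A$ by an equivalent cartesian square of degreewise flat simplicial rings whose four maps are all split surjective: the $0$-connectedness hypothesis combined with Lemma \ref{lem:good} permits replacing $f^1,f^2$ by surjections, and the homotopy cartesian property then forces $A^0\to A^1$ and $A^0\to A^2$ to be surjective as well. Setting $I(0)=A^{12}$, $I(1)=\ker f^1$ and $I(2)=\ker f^2$, distributivity identifies $\ifi HH(\A)$ with the cyclic abelian group whose $q$-simplices are
\[
\bigoplus_f \bigotimes_{i=0}^q I(f(i))
\]
indexed by functions $f\colon\Z/(q+1)\to\Z/3$ with both $f^{-1}(1)$ and $f^{-1}(2)$ nonempty.

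Following the template of Lemma \ref{lem:ratret2} I filter this cyclic object in two stages. The outer filtration uses the total weight $\sum_i f(i)\in\Z$; passing to the associated graded ring $\bigoplus_{n\geq 0}J^n/J^{n+1}$ with $J=I(1)+I(2)$ turns the ideal inclusions into equalities of graded degrees, so the outer-graded pieces are cyclic subobjects. On each outer-graded piece an inner ordering of the shapes of $f$, refining the partition-norm idea of \ref{lem:ratret2}, has as subquotients cyclic objects $H(f)$. For each such $f$ the set $A_f$ of Definition \ref{Def:Af} is nonempty; letting $G(f)$ be the simplicial subobject of summands of $H(f)$ whose zeroth slot lies in $A_f$, the assignment
\[
a\mapsto \sum_{j\in A_f}(t^j,t^{-j}a)
\]
is a well defined cyclic map $H(f)\to j_*G(f)$ whose composite with the adjoint $j_*G(f)\to H(f)$ of the inclusion is multiplication by $|A_f|\neq 0$, by the same face-map bookkeeping as in Lemmas \ref{lem:HjG} and \ref{lem:ratret2}. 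Hence each $H(f)$ is an almost free cyclic object.

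Both versions of $P$ invert multiplication by nonzero integers, so $P(H(f))$ is a homotopy retract of $P(j_*G(f))$; since $j_*$ is a finite coproduct in each degree, $P(j_*G(f))\simeq j_*(PG(f))$ is a free cyclic object on a simplicial abelian group, and by hypothesis $X$ kills it.  Preservation of homotopy limits gives $XP(H(f))\simeq *$ for every $f$.  The outer filtration tail $F^k$ has $F^k_q=0$ for $q$ small relative to $k$ (any $f$ contributing to $F^k$ must have at least $\lceil k/2\rceil$ nonzero slots), so the connectivity of $F^k$ tends to infinity with $k$ and $\holim_k XP(F^k)\simeq *$.  Assembling this with the subquotient vanishing yields $XP(\ifi HH(\A))\simeq *$, and since $X$ and $P$ both commute with homotopy limits this is equivalent to $XP(\A)$ being homotopy cartesian.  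The main obstacle is the combinatorial one of choosing the outer/inner filtration carefully enough that the $H(f)$ are genuine cyclic subquotients despite the face-map mixing of summands of equal weight, and that the composite $H(f)\to j_*G(f)\to H(f)$ really is multiplication by $|A_f|$; this is a direct adaptation of the partition-ordering argument of Lemma \ref{lem:ratret2}, with $A_f$ playing the role of the partition length.
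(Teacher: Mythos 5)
Your reduction to split squares is a genuine gap, and it is exactly the part of the argument that the paper has to work hardest for. Making the four maps surjective is indeed straightforward (replace by fibrations; for simplicial abelian groups with surjective $\pi_0$ these are surjections), but surjective is not the same as \emph{split} surjective, and the distributivity formula
$\ifi HH(\A)_q\cong\bigoplus_f\bigotimes_i I(f(i))$
that you invoke immediately afterwards requires additive splittings $A^0\cong A^{12}\oplus I(1)\oplus I(2)$, etc. A general cartesian square of surjections does not admit such splittings, and one cannot replace the square by an equivalent split one by a formal manipulation. The paper handles this by resolving $D=A^{12}$ via the bisimplicial object $B^D=\{r\mapsto B\times_D\cdots\times_D B\}$, using the extra degeneracy to obtain splittings \emph{degreewise}, and then arguing via simplicial homotopy equivalences (applied degreewise to avoid assuming $X$ commutes with realization) that the split case suffices; a second such resolution handles the other direction. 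Nothing in your proposal replaces this step.

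Even granting the split case, your internal argument diverges from the paper's in a way that reintroduces difficulty. The paper notices that because the split square is categorically cartesian, $I(1)\cdot I(2)\subseteq I(1)\cap I(2)=0$; consequently the statistic $|A_f|$ of Definition \ref{Def:Af} is literally preserved (not merely non-increasing) by all faces, degeneracies and cyclic rotations, so $\ifi HH(\A)$ is a \emph{direct sum} of cyclic subobjects $H(k)=\bigoplus_{|A_f|=k}\bigotimes I(f(i))$. No filtration is needed, and each $H(k)$ is almost free by the map $a\mapsto\sum_{r\in A_f}(r,t^{-r}a)$ exactly as you describe. Your proposed double filtration (outer by $\sum_i f(i)$, inner by a partition-norm refinement on the ``shape'' of $f$) is more delicate: the outer weight is strictly decreased by faces merging two $1$s or two $2$s, so the weight-graded pieces have further structure to control, and your candidate subquotients $H(f)$ for a single $f$ are not cyclic subobjects. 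You flag this as ``the main obstacle'' but do not resolve it, whereas the paper's choice of $|A_f|$ as the grading invariant dissolves the obstacle entirely.

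Secondary points: the appeal to Lemma \ref{lem:good} is not needed here (the paper uses it in Proposition \ref{prop:Xvanish}, not in this lemma), and the hypothesis on $X$ in the statement is vanishing on free cyclic objects, but you also need vanishing on \emph{almost} free ones, which the paper obtains because rationalization (resp.\ profinite completion followed by rationalization) turns the infinite sum $\bigoplus_k H(k)$ into a product and inverts the integer $k$ on each factor; your proposal gestures at this but should make the $\oplus\simeq\prod$ step explicit since $X$ is only assumed to preserve homotopy limits.
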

\begin{proof}
Let $\A$ be a split square.  Note that, since $I(1)\cdot I(2)\subseteq I(1)\cap I(2)=0$, we have a decomposition
of the iterated fiber of $HH(\A)$ into a sum $\bigoplus_{k=1}^\infty
H(k)$ where $H(k)$ is the cyclic abelian group with $q$-simplices
$$H(k)_q=\bigoplus_{\substack{f\\|A_f|=k}}\bigotimes_{i=0}^qI(f(i)).$$

Analogous to the argument in Lemma \ref{lem:ratret} there is an
interesting subsimplicial abelian group $G(k)\subseteq H(k)$ given
as the sum over only those $f$ with $|A_f|=k$ and $0\in A_f$, and a map
$$H(k)\to j_*G(k).$$
sending $a\in H(k)_q$ in the $f$th summand to $\sum_{r\in
  A_f}(r,t^{-r}a)$.
Notice that the composite $H(k)\to j_*G(k)\to H(k)$ is multiplication
by $k$, and so $H(k)$ is almost free cyclic.  This proves the lemma in
  the case where the square $\A$ is split since the connectivity of
  $H(k)$ goes to infinity with $k$ and so
  $\bigoplus_{k>0}H(k)\simeq \prod_{k>0}H(k)$ is a retract of
  a free cyclic object both under rationalization and under profinite
  completion followed by rationalization.  

We reduce the general case to the split case.
For simplicity of notation let
$$\A=\left\{
  \begin{CD}
    A@>>>B\\@VVV@VVV\\C@>g>>D
  \end{CD}
  \right\}
$$
with $B\to D$ and $C\to D$ surjective on $\pi_0$.  We may assume that
these maps are fibrations, and so surjections (since a map $B\to D$ of simplicial abelian
groups is a fibration iff $B\to D\times_{\pi_0D}\pi_0B$ is a
surjection) and that the square is categorically cartesian.  

Consider the (bi)simplicial resolution of $D$ 
$$B^D=\{r\mapsto B\times_D\dots\times_D B
\}$$ ($r+1$ factors of $B$ in degree $r$ and multiplication componentwise) where $d_i$ projects away from the
$i$'th factor and $s_i$ repeats the $i$'th factor.  That $B^D\to D$ is
an equivalence is fairly general, but in this context can be seen
directly by noting that the normal complex of $B^D$ is simply the
inclusion of $0\times_DB$ into $B$.

By taking
pullback, we get a resolution of $\A$ with $r$-simplices
$$
\begin{CD}
  B\times_DB^D_r\times_DC@>>>B\times_DB^D_r\\
  @VVV@VVV\\
  B^D_r\times_DC@>>>B^D_r
\end{CD}.
$$
Note that $B\times_DB^D$ and $B\times_DB^D\times_DC$ have an ``extra
degeneracy'' given by duplicating the first factor:
$(b,b_0,\dots,b_r,c)\mapsto (b,b,b_0,\dots,b_r,c)$.  

If $i\colon\{1,2\}\to\{0,\dots,s\}$ is an injection and $t\in
\{0,\dots,s\}$, let 
$I(i,t)$ equal $B\times_DB^D\times_DC$ if $t\notin im(i)$ and
$I(i,i(1))$ (resp. $I(i,i(2))$) be the ideal $0\times_DC$
(resp. $B\times_D0$) in $B\times_DB^D\times_DC$.

Applying
Hochschild homology to the square in each dimension and taking the
iterated kernel gives us a simplicial  cyclic object which in dimension $(r,s)$ is
$$I_{rs}=\sum_{i}\bigotimes_{t=0}^sI_r(i,t)\subseteq
(B\times_DB^D_r\times_DC)^{\otimes s+1}.$$
Note that the extra degeneracy $B\times_DB^D_r\times_DC\to
B\times_DB^D_{r+1}\times_DC$ induces a map on all the $I_r(i,t)$'s
compatible with the structure map in the Hochschild direction, giving
us a simplicial cyclic object 
$I=\left\{[r]\mapsto I_{r}=\{[s]\mapsto I_{rs}\}\right\}$
and a {\em simplicial} homotopy equivalence $I\we I_{-1}=\ifi HH(\A)$.

Simplicial homotopy equivalences are preserved when functors are applied degreewise to them, and so we get a simplicial homotopy equivalence
$$\{[r]\mapsto X(I_r)\}\we X(I_{-1}).
$$
But since $X$ preserves cartesian squares $X(I_r)$ is the iterated
fiber of $X\circ HH$ applied to the $r$-simplices of our resolution of
$\A$.  In dimension $r$ this splits in the vertical direction, so it
is enough to show excision in cartesian squares with vertical (or horizontal)
splittings.

e may repeat the argument above, starting this time with a square with horizontal
splitting we reduce to the case where both the vertical and the
horizontal maps split.

\end{proof}
Note that we did not assume that $X$ could be ``calculated degreewise'' (which is false in the applications we have in mind), but got around this by considering simplicial homotopy equivalences, where we could apply $X$ degreewise to our resolution without destroying the homotopy type in our special case.

\subsection{Proof of Proposition \ref{prop:HP} and \ref{prop:Tatevanish}}
\label{sec:TateHPvanish}
\begin{proof}[Proof of Proposition \ref{prop:HP}]
Let $X$ be the Eilenberg-MacLane
spectrum associated with periodic cyclic homology and apply
Proposition \ref{prop:Xvanish} and the $P(A)=HH(A)_{(0)}$ part of
Lemma \ref{lem:XPA} (rationalization doesn't change anything since the
rings were already rational).
\end{proof}

\begin{proof}[Proof of Proposition \ref{prop:Tatevanish}]
By resolving connective $\ess$-algebras by simplicial rings as in
\cite{D97}, we see that it is enough to establish
\ref{prop:Tatevanish} for $\A$ a cartesian square of simplicial rings,
with all maps $0$-connected.
In Lemma \ref{lem:XPA}, let $P(A)=HH(A)\prof_{(0)}$.  By Lemma
\ref{lem:THHvsHH} below, the Eilenberg-MacLane spectrum associated
with  $P(A)$ is equivalent to $THH(A)\prof_{(0)}$.  Let $X(M)=(H(M))^{t\T}$ be
the $\T$-Tate homology of the Eilenberg-MacLane spectrum, and observe
that by Lemma \ref{lem:Tateoffreevanish} below, $X$ satisfies the
conditions of Lemma \ref{lem:XPA}, showing that
$(THH(\A)\prof_{(0)})^{t\T}$ is cartesian.
\end{proof}

\begin{Def}
  Let $X$ be a spectrum and let $N\colon\Z\to\Z_+$ be a function from the integers to the positive integers.  We say that $X$ is {\em $N$-annihilated} if for each $k$ the group $\pi_kX$ is annihilated by $N(k)$.  A map $X\to Y$ is an {\em $N$-equivalence} if its homotopy fiber is $N$-annihilated, and a {\em torsion equivalence} if it is an $M$-equivalence for some unspecified $M\colon\Z\to\Z_+$.
\end{Def}
Note that there is no finiteness requirements in this definition, just a statement about the torsion.
\begin{lemma}\label{lem:THHvsHH}
  Let $A$ be a simplicial ring.  Then the linearization map
$$THH(HA)\to H(HH(A))$$
is a torsion equivalence.  Consequently there are a natural equivalences of
cyclic spectra
\begin{align*}
  THH(A)_{(0)}\we &H\left(HH(A)\right)_{(0)}\\
  THH(A)\prof_{(0)}\we &H\left(HH(A)\right)\prof_{(0)}.
\end{align*}
\end{lemma}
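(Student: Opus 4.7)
The plan splits naturally into establishing the torsion equivalence first and then deducing the two displayed consequences. The consequences are formal: if the homotopy fiber $F$ of $THH(HA)\to H(HH(A))$ is $N$-annihilated for some $N\colon\Z\to\Z_+$, then $F_{(0)}\simeq *$ because rationalization kills any bounded torsion group, and $F\prof_{(0)}\simeq *$ because the Ext-completion of an $N(k)$-annihilated abelian group remains $N(k)$-annihilated, hence vanishes after rationalization. So I would reduce entirely to the core assertion that the linearization $THH(HA)\to H(HH(A))$ is a torsion equivalence.

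Both sides realize simplicial (and indeed cyclic) spectra: $[q]\mapsto HA^{\smsh q+1}$ respectively $[q]\mapsto H(A^{\otimes q+1})$, and the linearization is given degreewise by iterating the multiplication map $HA\smsh HA\to H(A\otimes A)$. The first concrete step is the binary case: the K\"unneth spectral sequence
$$E^2_{s,t}=\Tor^{\pi_*S}_{s,t}(\pi_*HA,\pi_*HA)\Rightarrow\pi_{s+t}(HA\smsh HA)$$
has edge homomorphism the linearization, and since $\pi_nS$ is finite for $n>0$ by Serre, every column with $s>0$ consists of groups that are annihilated by an integer depending only on the total degree $s+t$. Hence $HA\smsh HA\to H(A\otimes A)$ is a torsion equivalence whose bound depends only on degree. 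Iterating by associativity handles the $(q+1)$-fold smash with a bound that depends only on $q$ and the degree.

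The second step passes this degreewise torsion equivalence to a torsion equivalence of the geometric realizations. Because everything in sight is connective, the Bousfield-Kan spectral sequence computing $\pi_*$ of the realization of the degreewise homotopy fiber only picks up contributions from finitely many simplicial degrees in any fixed total degree $k$, and in each such degree the torsion bound depends only on $k$. Taking the lcm of these finitely many bounds produces a uniform $N(k)$ annihilating $\pi_k$ of the fiber of $THH(HA)\to H(HH(A))$, which is what is required. The main obstacle I expect is the bookkeeping of torsion bounds through the two spectral sequences: one must check that the K\"unneth bounds propagate controllably under iteration and that the Bousfield-Kan spectral sequence for the realization does not accumulate unbounded torsion (for instance, a tower of $\Z/p^n$ with $n\to\infty$ in a fixed total degree would be fatal). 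Once this is verified, the lemma follows and the construction is manifestly compatible with the cyclic structure since the linearization is a map of cyclic objects.
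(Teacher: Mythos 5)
Your overall strategy matches the paper's: identify the $q$-simplices of both sides, reduce to a binary statement about a smash/tensor product, invoke the Tor/K\"unneth spectral sequence over $\pi_*\ess$ together with Serre's finiteness of $\pi_n\ess$ for $n>0$, and finally pass from a degreewise torsion equivalence to the realization. Your first paragraph (the formal deduction of the two rationalization/completion statements) and your last step (degreewise torsion equivalence of simplicial spectra gives a torsion equivalence of realizations) are fine and coincide with what the paper does. The gap is in the binary step.

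You assert that in $E^2_{s,t}=\Tor^{\pi_*\ess}_{s,t}(\pi_*HA,\pi_*HA)$ every column with $s>0$ is annihilated by an integer depending only on $s+t$, and that the edge homomorphism onto $E^2_{0,*}\cong\pi_*A\otimes_{\Z}\pi_*A$ is the linearization. Neither is quite right. Even with degreewise-flat $A$, a single homotopy group $\pi_nA$ can have unbounded torsion (take the normalized complex $\bigoplus_{k}\Z\xrightarrow{\diag(1,2,3,\dots)}\bigoplus_k\Z$), so $\Tor^\Z_1(\pi_*A,\pi_*A)$, which appears in the $s=1$ column of $\Tor^{\pi_*\ess}$, need not be of bounded exponent; relatedly, $\pi_*H(A\otimes A)$ is computed by the K\"unneth sequence over $\Z$ and contains this same $\Tor^\Z_1$ term, so the linearization is not the edge map onto $\pi_*A\otimes_\Z\pi_*A$. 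The paper avoids both issues by comparing two spectral sequences rather than analyzing the columns of one: the linearization is covered by a map of K\"unneth $E^2$-pages $\Tor^{\pi_*\ess}_{s,t}(\pi_*A,\pi_*A)\to\Tor^\Z_{s,t}(\pi_*A,\pi_*A)$, and what is bounded in each bidegree is the kernel and cokernel of this comparison map (because $\ess\to H\Z$ is a torsion equivalence), not the columns themselves. Totalizing those bounds over $0\le s\le k$ then yields the integer $N(k)$ annihilating $\pi_k$ of the fiber. If you replace your column analysis by this comparison of spectral sequences, the rest of your argument goes through.
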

\begin{proof}
If a map of simplicial spectra is a degreewise torsion equivalence then its diagonal is a torsion equivalence.
  The topological Hochschild homology of $HA$ is a simplicial spectrum which in  dimension $q$ is equivalent to $HA\smsh^L_{\ess}\dots\smsh^L_{\ess}HA$ and maps to  $HA\smsh^L_{H\Z}\dots\smsh^L_{H\Z}HA$ which is equivalent to the $q$-simplices of $H(HH(A))$.  Hence, it is enough to show that for simplicial abelian groups $M$ and $N$ the map $HM\smsh^L_{\ess}HN\to HM\smsh^L_{H\Z}HN$ a torsion equivalence.  There is an associated map of first quadrant spectral sequences with $E^2$-sheet
$$\Tor_*^{\pi_*\ess}(\pi_*M,\pi_*N)\to \Tor_*^{\Z}(\pi_*M,\pi_*N)$$
converging to $\pi_*(HM\smsh^L_{\ess}HN)\to \pi_*(HM\smsh^L_{H\Z}HN)$.  Now, the map of $E^2$ sheets has kernel and cokernel with annihilated by integers depending on position since $\ess\to H\Z$ is a torsion equivalence.  The numbers annihilating the kernel and cokernel do not change as we move to the $E^\infty$-sheets, and moving to $\pi_k(HM\smsh^L_{\ess}HN)\to \pi_k(HM\smsh^L_{H\Z}HN)$ the kernel and cokernel are annihilated by the product of the numbers needed for the $E^\infty_{s,k-s}$ as $s$ runs from $0$ to $k$.
\end{proof}

\begin{cor}\label{cor:tosionorbit}
  There is a function $L\colon\Z\to\Z_+$ such that, for any subgroup $C$ of the circle, the map
$$|THH(HA)|_{hC}\to |H(HH(A))|_{hC}$$
is an $L$-equivalence.
\end{cor}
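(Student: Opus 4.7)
The plan is to reduce the statement to a straightforward application of the homotopy-orbit spectral sequence to the fiber. By Lemma \ref{lem:THHvsHH} the map $THH(HA)\to H(HH(A))$ is a dimensionwise torsion equivalence of cyclic spectra, and the diagonal principle used in that lemma (degreewise torsion equivalence of simplicial spectra yields a torsion equivalence on realization) shows that the induced map $|THH(HA)|\to|H(HH(A))|$ is again a torsion equivalence. Let $F$ denote its homotopy fiber: there is a function $N\colon\Z\to\Z_+$ with $\pi_q F$ annihilated by $N(q)$. Since both $|THH(HA)|$ and $|H(HH(A))|$ are $0$-connective spectra, $F$ is $(-1)$-connective, \ie $\pi_q F=0$ for $q\leq -2$.

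Since $(-)_{hC}$ preserves homotopy fiber sequences, the fiber of $|THH(HA)|_{hC}\to|H(HH(A))|_{hC}$ is $F_{hC}$, so the task reduces to bounding the torsion of $F_{hC}$ uniformly in $C$. For this I would run the homotopy-orbit spectral sequence
$$E^2_{p,q}=H_p(BC;\pi_q F)\Rightarrow \pi_{p+q}(F_{hC}).$$
Additivity of $H_p(BC;-)$ implies that $E^2_{p,q}$, and hence every $E^r_{p,q}$, is annihilated by $N(q)$. The $(-1)$-connectivity of $F$ restricts the contributing rows to $q\geq -1$, so on each diagonal $p+q=k$ only the finitely many values $-1\leq q\leq k$ can give a nonzero $E^\infty_{p,q}$. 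This yields a finite filtration of $\pi_k(F_{hC})$ whose successive subquotients are annihilated by $N(-1),N(0),\dots,N(k)$, and therefore $\pi_k(F_{hC})$ is annihilated by $L(k):=\prod_{q=-1}^{k}N(q)$, a bound independent of $C$.

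The only subtle point is strong convergence of the spectral sequence, but this is automatic because $F$ is bounded below and for any closed subgroup $C\subseteq\T$ (\ie either a finite cyclic group or $\T$ itself) the skeletal filtration of $BC$ is increasingly connective, forcing the filtration on $\pi_k(F_{hC})$ to be finite. Thus $L$ as above is the required function, and the map of the corollary is an $L$-equivalence for every $C$.
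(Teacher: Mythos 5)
Your proof is correct and follows essentially the same route as the paper's: identify the homotopy fiber $F$ of $|THH(HA)|\to|H(HH(A))|$ as $N$-annihilated via Lemma \ref{lem:THHvsHH}, then run the $C$-homotopy-orbit spectral sequence for $F$ to obtain a uniform bound $L(n)$ given by a product of the $N(q)$'s independent of $C$. The only cosmetic differences are your use of the $E^2$-page $H_p(BC;\pi_qF)$ where the paper works from the $E^1$-page, and your inclusion of the $q=-1$ row in the product.
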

The point of this corollary is that $L$ does not depend on $C$.
\begin{proof}
  Consider the spectral sequence calculating the $C$-homotopy orbits of the homotopy fiber $F$ of $|THH(HA)|\to |H(HH(A))|$.  Lemma \ref{lem:THHvsHH} gives that $F$ is $N$-annihilated by some $N$.  Hence $E^1_{s,r}=\pi_sF$ and $E^\infty_{r,s}$ are annihilated by $N(s)$ and $\pi_nF_{hC}$ is annihilated by $L(n)=N(0)\cdot N(1)\cdot\dots\cdot N(n)$.
\end{proof}

\begin{lemma}\label{lem:Tateoffreevanish}
  Let $Y$ be a simplicial spectrum.  Then the $\T$-Tate homology of
  $|j_*Y|$ vanishes.  
\end{lemma}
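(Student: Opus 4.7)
The goal is to show $(|j_*Y|)^{t\T} \simeq *$. My plan is to identify $|j_*Y|$ as an induced, hence free, $\T$-spectrum and then invoke the standard vanishing of the Tate construction on such spectra.

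Step 1 (geometric identification of $|j_*Y|$). I would start from the classical observation that for a pointed simplicial set $X$, the cyclic set $j_*X$ has $q$-simplices $X_q\smsh(C_{q+1})_+$, with the cyclic operator $t$ permuting the wedge summands. There is a natural $\T$-equivariant homeomorphism
$$|j_*X|\cong |X|\smsh \T_+,$$
where $\T$ acts trivially on $|X|$ and by left translation on $\T_+$. This is verified by comparing the cyclic-simplicial gluing on $\coprod_q X_q\times (C_{q+1})_+\times\Delta^q$ with the standard prismatic decomposition of the cylinder $|X|\times\T$: the cyclic generator exactly cycles the sheets over $|X|$. Applied levelwise to the simplicial spectrum $Y$, and carried through the simplicial-functor structure on spectra, this produces a natural $\T$-equivariant equivalence
$$|j_*Y|\simeq |Y|\smsh \T_+.$$

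Step 2 (Tate vanishes on free $\T$-spectra). The $\T$-spectrum $|Y|\smsh\T_+$ is induced from the trivial subgroup of $\T$; in particular it admits a free $\T$-CW structure. For any such free $Z$, smashing the cofiber sequence $E\T_+\to S^0\to \widetilde{E\T}$ with $Z$ gives $E\T_+\smsh Z\simeq Z$, and consequently $\widetilde{E\T}\smsh Z\simeq *$. Since $Z^{t\T}$ is computed as the genuine $\T$-fixed points of $\widetilde{E\T}\smsh F(E\T_+,Z)$ (after fibrant replacement), we conclude $Z^{t\T}\simeq *$. Specializing to $Z=|Y|\smsh\T_+$ yields the lemma.

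The main obstacle is to make Step 1 rigorous in the paper's chosen framework of simplicial-functor spectra: the identification $|j_*Y|\simeq |Y|\smsh \T_+$ must be genuinely $\T$-equivariant, not merely an underlying equivalence, and must be implemented compatibly with the spectrum structure on $Y$. All of the real work is concentrated there; once this geometric input is in hand, Step 2 is a completely formal consequence of Tate vanishing for induced spectra.
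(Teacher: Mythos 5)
Your argument is exactly the one the paper uses: identify $|j_*Y|$ $\T$-equivariantly with $\T_+\smsh|Y|$ and then invoke vanishing of the Tate construction on induced (free) $\T$-spectra. The paper states this in a single line; your elaboration of both steps is correct and matches the intended proof.
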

\begin{proof}
  This follows since $|j_*Y|\cong \T_+\smsh |Y|$, and Tate homology
  vanishes on free objects. 
\end{proof}

\begin{cor}\label{cor:retroffree}
  Let $X$ be an almost free cyclic spectrum.  Then the natural map
  $(X^{h\T})_{(0)}\to (X_{(0)})^{h\T}$ is an equivalence.
\end{cor}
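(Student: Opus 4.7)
The plan is to reduce the claim to the case of a genuinely free cyclic spectrum $j_*Y$ (handled by Lemma~\ref{lem:Tateoffreevanish}) and then propagate the result to almost free cyclic spectra through a retract argument supplied by the definition of ``almost free''.

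First I would establish the free case. For any simplicial spectrum $Y$, the cofiber sequence $(j_*Y)_{h\T}\to (j_*Y)^{h\T}\to (j_*Y)^{t\T}$ together with Lemma~\ref{lem:Tateoffreevanish} yields an equivalence $(j_*Y)_{h\T}\simeq (j_*Y)^{h\T}$. Since rationalization commutes with the coproducts packaging $j_*$, the spectrum $(j_*Y)_{(0)}$ is again a free cyclic spectrum, so the same argument gives $((j_*Y)_{(0)})_{h\T}\simeq ((j_*Y)_{(0)})^{h\T}$. As rationalization is a smashing localization it commutes with homotopy orbits, whence the natural chain of equivalences
\[
((j_*Y)^{h\T})_{(0)}\simeq ((j_*Y)_{h\T})_{(0)}\simeq ((j_*Y)_{(0)})_{h\T}\simeq ((j_*Y)_{(0)})^{h\T}
\]
identifies the comparison map for $j_*Y$ as an equivalence.

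For an almost free $X$, choose maps $X\xrightarrow{a} j_*Y\xrightarrow{b} X$ whose composite is multiplication by some $k\neq 0$ on $\pi_*X$. By naturality of $(-)^{h\T}_{(0)}\to (-_{(0)})^{h\T}$ I would form a commutative three-row ladder whose middle row is the equivalence of the previous paragraph and whose top and bottom rows coincide with the map we want to show is an equivalence. The vertical composites are multiplication by $k$, which is an equivalence after rationalization, so both columns split the middle equivalence off the unknown map. A direct $\pi_*$-chase --- injectivity since the rationalized $a_*$ is split mono, and surjectivity by lifting any $y$ to $k^{-1}b_*$ of a preimage of $a_*y$ through the middle equivalence --- then shows that the comparison map for $X$ is an equivalence.

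The principal obstacle, as I see it, is bookkeeping rather than substance: checking that the comparison map is natural enough to produce a commuting ladder, and that rationalization passes cleanly through both the coproducts defining $j_*$ and the homotopy orbits construction. All the genuinely homotopical content lies in Lemma~\ref{lem:Tateoffreevanish}; the remainder is formal manipulation that becomes valid after inverting the integer $k$.
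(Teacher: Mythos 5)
Your proposal is correct and uses the same essential ingredients as the paper (Lemma~\ref{lem:Tateoffreevanish}, the $\T$-norm cofiber sequence, and the commutation of homotopy orbits with rationalization), but it packages the almost-free retraction differently: the paper applies the retraction $X\to j_*Y\to X$ directly to the Tate construction to conclude that $X^{t\T}$ is $k$-torsion (hence rationally trivial) and that $(X_{(0)})^{t\T}$ is a retract of a contractible spectrum, after which a single application of the norm sequence finishes things; you instead prove the statement first for genuinely free cyclic spectra and then run a three-row ladder/$\pi_*$-chase to transfer it to $X$. Both routes work; the paper's is a bit more direct since it avoids the ladder. One small slip: the $\T$-norm cofiber sequence reads $\Sigma X_{h\T}\to X^{h\T}\to X^{t\T}$ (with a suspension, $\T$ being one-dimensional), not $X_{h\T}\to X^{h\T}\to X^{t\T}$ --- this does not affect your argument, which only needs that the first map becomes an equivalence when the Tate term vanishes.
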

\begin{proof}
  By the lemma, both the source and the target of $(X^{t\T})_{(0)}\to
  (X_{(0)})^{t\T}$ are contractible, so the $\T$-norm maps $S^1\smsh
  (X_{h\T})_{(0)}\to (X^{h\T})_{(0)}$ and
  $S^1\smsh(X_{(0)})_{h\T}\to(X_{(0)})^{h\T}$ are both equivalences.
  Homotopy orbits commute with rationalization, so we are done.
\end{proof}

\section{Relations between $TC$ and homotopy $\T$-fixed points}
\label{sec:TCandTfixed}

Topological cyclic homology $TC(A)$ of a connective $\ess$-algebra $A$
is most effectively defined integrally, as in \cite{DGM}, by a cartesian square
$$
\begin{CD}
  TC(A)@>>>THH(A)^{h\T}\\
@VVV@VVV\\
\left(\CDholim[R,F]THH(A)^{C_n}\right)\prof@>>>\left(\CDholim[F]THH(A)^{hC_n}\right)\prof
\end{CD}.
$$
Here $R$ and $F$ are maps $THH(A)^{C_{mn}}\to THH(A)^{C_{n}}$ called
respectively the restriction and Frobenius (the latter is just
inclusion of fixed points) where $m$ and $n$ are
positive integers.  The homotopy limit in the lower left corner is
over the category whose objects are the positive integers, and where
the morphisms are freely generated by commuting morphisms $R\colon
mn\to n$ and $F\colon mn\to m$.

The lower horizontal map in the defining square for $TC$ is a
composite
$$\holim[R,F]\left(THH(A)^{C_n}\right)\prof\to\holim[F]\left(THH(A)^{C_n}\right)\prof\to\holim[F]\left(THH(A)^{hC_n}\right)\prof$$
where the first map is projection to the homotopy limit of the
subcategory generated by the $F$'s only and the second map is the map from fixed points to homotopy fixed points.  The rightmost vertical map is given by the restriction from the homotopy fixed points of all of $\T$ to its finite subgroups. 

This definition is equivalent to Goodwillie's original definition in
terms of an enriched homotopy limit involving a mix of the
restriction, Frobenius and the entire circle action, but is better
suited for our purposes.

\begin{lemma}[Goodwillie]\label{lemma:main}
  For any connective $\ess$-algebra $A$, both the squares in
$$
\begin{CD}
   TC(A)_{(0)}@>>>\left(THH(A)^{h\T}\right)_{(0)}@>>>\left(THH(A)_{(0)}\right)^{h\T}\\
   @VVV@VVV@VVV\\
  TC(A)\prof_{(0)}@>>>\left(THH(A)^{h\T}\right)\prof_{(0)}@>>>\left(THH(A)\prof_{(0)}\right)^{h\T}
\end{CD}
$$
are homotopy cartesian.
\end{lemma}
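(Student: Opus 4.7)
The plan is to handle the two squares of the lemma separately. The right square expresses compatibility of homotopy $\T$-fixed points with rationalization and with rationalized profinite completion; I expect both horizontal natural maps to be equivalences, so that the right square is tautologically cartesian. The left square is then an arithmetic-square statement that reduces, via the defining cartesian square for $TC(A)$ recalled at the start of this section, to the observation that a homotopy fiber of profinite spectra is again profinite.

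For the right square I would argue via the homotopy fixed point spectral sequence
$$E_2^{s,t} = H^s(B\T;\pi_t THH(A)) \Longrightarrow \pi_{t-s} THH(A)^{h\T}.$$
Since $THH(A)$ is connective, this spectral sequence lives in a half plane and converges strongly, and tensoring the abutment with $\Q$ agrees with the corresponding rationalized spectral sequence; this yields the equivalence $(THH(A)^{h\T})_{(0)}\we (THH(A)_{(0)})^{h\T}$. The analogous argument, combined with the uniform torsion bound of Corollary \ref{cor:tosionorbit} applied to the $C_n$-homotopy orbit towers (to control the extra $\lim^1$-defects introduced by profinite completion), gives the bottom horizontal equivalence $(THH(A)^{h\T})\prof_{(0)}\we (THH(A)\prof_{(0)})^{h\T}$. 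With equivalences along both rows, the right square is trivially homotopy cartesian.

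For the left square, set $Z = \holim[R,F] THH(A)^{C_n}$ and $W = \holim[F] THH(A)^{hC_n}$. The defining cartesian square for $TC(A)$ produces a canonical equivalence
$$F := \hofib\bigl(TC(A)\to THH(A)^{h\T}\bigr) \we \hofib\bigl(Z\prof \to W\prof\bigr).$$
The right-hand side is the homotopy fiber of a map between profinitely complete spectra, and hence is itself profinite (profinite completion preserves homotopy fibers in our connective setting), so the canonical map $F \to F\prof$ is an equivalence. The iterated fiber of the left square of the lemma equals $\hofib(F_{(0)}\to F\prof_{(0)})$, which is equivalent to $\hofib(F\to F\prof)\simeq \ast$ by the standard arithmetic square for $F$. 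Hence the left square is homotopy cartesian.

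The principal obstacle I foresee is the profinite piece of the right square, namely $(THH(A)^{h\T})\prof_{(0)}\we (THH(A)\prof_{(0)})^{h\T}$. The operations $(-)\prof$ and $(-)^{h\T}$ do not commute in general, since $h\T$ is an inverse limit along the Postnikov tower of $B\T$ while profinite completion is itself an $\mathrm{Ext}$-completion that can introduce additional $\lim^1$-defects; the technical heart of the argument will be to bound these defects by a function of the degree, in the spirit of Corollary \ref{cor:tosionorbit}, so that they are annihilated by the subsequent rationalization.
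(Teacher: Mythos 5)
Your argument for the \emph{left} square is essentially correct and matches the spirit of the paper: the defining cartesian square for $TC(A)$ identifies $F=\hofib\bigl(TC(A)\to THH(A)^{h\T}\bigr)$ with $\hofib(Z\prof\to W\prof)$, which is profinitely complete (profinite spectra form a Bousfield-local, hence limit-closed, subcategory), so $F\to F\prof$ is an equivalence and the left square is cartesian. The paper phrases this slightly differently (via the observation that the right vertical arrow $THH(A)^{h\T}\to W\prof$ in the defining square is a profinite equivalence), but the two arguments are close.

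Your treatment of the \emph{right} square, however, has a genuine gap. You claim both horizontal maps in the right square are equivalences, beginning with
\begin{equation*}
\bigl(THH(A)^{h\T}\bigr)_{(0)}\longrightarrow\bigl(THH(A)_{(0)}\bigr)^{h\T},
\end{equation*}
justified by the homotopy fixed point spectral sequence. This is false in general for a connective $\T$-spectrum. Because $B\T$ is infinite-dimensional, the HFPSS has, in each total degree, infinitely many nonzero contributions from arbitrarily high filtration; the abutment is a limit along this filtration, and rationalization does not commute with that limit. Concretely, for a connective $X$ with $\pi_{2n}X$ finite of unbounded order and vanishing otherwise (say $X=\bigvee_n\Sigma^{2n}H\mathbf Z/n!$), one has $X_{(0)}\simeq *$ and hence $\bigl(X_{(0)}\bigr)^{h\T}\simeq *$, whereas $\pi_0 X^{h\T}\cong\prod_n\mathbf Z/n!$ has elements of infinite order, so $\bigl(X^{h\T}\bigr)_{(0)}\not\simeq *$. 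Thus ``strong convergence'' does not rescue the claim; the error is precisely that tensoring the abutment with $\Q$ is not the same as rationalizing the $E_\infty$-page when the filtration is infinite.

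What is true, and what the paper uses, is weaker but sufficient: the right square is \emph{cartesian} without either horizontal map being an equivalence. The paper extends the picture to the $2\times 3$ diagram
\begin{equation*}
\begin{CD}
  THH(A)^{h\T}@>>>\bigl(THH(A)^{h\T}\bigr)_{(0)}@>>>\bigl(THH(A)_{(0)}\bigr)^{h\T}\\
  @VVV@VVV@VVV\\
  \bigl(THH(A)^{h\T}\bigr)\prof@>>>\bigl(THH(A)^{h\T}\bigr)\prof_{(0)}@>>>\bigl(THH(A)\prof_{(0)}\bigr)^{h\T}
\end{CD}
\end{equation*}
and observes that the left square is the arithmetic square for $THH(A)^{h\T}$, while the outer square is obtained by applying the limit-preserving functor $(-)^{h\T}$ to the arithmetic square for $THH(A)$ (identifying $(THH(A)^{h\T})\prof$ with $(THH(A)\prof)^{h\T}$, which does commute because profinite completion and $(-)^{h\T}$ are both homotopy limits). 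Since both are cartesian, two-out-of-three for cartesian squares (equivalently, two-out-of-three for equivalences between vertical homotopy fibers of the three columns) gives that the right square is cartesian. You should replace your ``both horizontals are equivalences, hence tautologically cartesian'' reasoning with this pasting argument.
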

\begin{proof}
  The right vertical map
  $THH(A)^{h\T}\to(\holim[F]THH(A)^{hC_n})\prof$ in the
  defining square for $TC$ is an equivalence after profinite
  completion (essentially because $\colim{n}BC_n\to B\T$ is a profinite
  equivalence), and so the square
$$
\begin{CD}
  TC(A)@>>>THH(A)^{h\T}\\@VVV@VVV\\
  TC(A)\prof@>>>\left(THH(A)^{h\T}\right)\prof
\end{CD}
$$
is homotopy cartesian even before rationalization.  Both the left and outer square in
$$
\begin{CD}
  THH(A)^{h\T}@>>>\left(THH(A)^{h\T}\right)_{(0)}@>>>\left(THH(A)_{(0)}\right)^{h\T}\\
  @VVV@VVV@VVV\\
  \left(THH(A)^{h\T}\right)\prof@>>>\left(THH(A)^{h\T}\right)\prof_{(0)}@>>>\left(THH(A)\prof_{(0)}\right)^{h\T}
\end{CD}
$$
are homotopy cartesian (they both come from arithmetic squares), and so the right square is homotopy cartesian.  
\end{proof}

A technical issue we are faced with in proving Theorem \ref{thm:main} is commuting homotopy limits and rationalization.  Apart from connectivity arguments we need to be able to commute homotopy $\T$-fixed points and rationalization in the almost free cyclic case.

\begin{lemma}\label{lem:retroffree}
  Given an almost free cyclic spectrum $X$, the map 
$$(\holim[F]X^{hC_n})\prof_{(0)}\to
  (X\prof_{(0)})^{h\T}$$
  is an equivalence.
\end{lemma}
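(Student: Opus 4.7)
The plan is to reduce to the case $X=j_*Z$ of a free cyclic spectrum via the almost-free retract, and then compute both sides directly using the $\T$-norm cofiber sequence together with the vanishing of the $\T$-Tate construction on free $\T$-spectra. I would factor the comparison map as
$$(\holim[F]X^{hC_n})\prof_{(0)}\xleftarrow{\;\simeq\;}(X^{h\T})\prof_{(0)}\longrightarrow(X\prof_{(0)})^{h\T},$$
where the left arrow is the rationalization of the profinite equivalence $(X^{h\T})\prof\simeq(\holim[F]X^{hC_n})\prof$ already used in the proof of Lemma~\ref{lemma:main} (which ultimately rests on $\colim{n}BC_n\to B\T$ being a profinite equivalence), and the right arrow is induced by the completion--rationalization $X\to X\prof_{(0)}$ followed by $(-)^{h\T}$, using that the target is itself profinitely complete and rational.

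By assumption $X$ is almost free cyclic, so there is a simplicial spectrum $Z$ together with maps $X\to j_*Z\to X$ whose composite is multiplication by some nonzero integer $k$ on homotopy. Since $k$ becomes invertible after rationalization, $X$ is a retract of $j_*Z$ in the rational homotopy category, and more generally both sides of the displayed factorization applied to $X$ are retracts of the corresponding sides applied to $j_*Z$; all the operations involved---profinite completion, rationalization, $(-)^{hC_n}$, $\holim[F]$, and $(-)^{h\T}$---preserve retracts. Hence it suffices to prove the lemma for $X=j_*Z$.

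For $X=j_*Z$, the geometric realization $|j_*Z|$ is a free $\T$-spectrum, equivalent to $\T_+\smsh|Z|$. By Lemma~\ref{lem:Tateoffreevanish} the $\T$-Tate construction vanishes on it, so the $\T$-norm cofiber sequence $\Sigma W_{h\T}\to W^{h\T}\to W^{t\T}$ yields $(j_*Z)^{h\T}\simeq\Sigma(j_*Z)_{h\T}\simeq\Sigma|Z|$. Since profinite completion and rationalization commute with smashing against the dualizable $\T_+$ and with suspension, $(j_*Z)\prof_{(0)}$ is itself a free $\T$-spectrum of the form $\T_+\smsh|Z|\prof_{(0)}$, and the same argument gives $((j_*Z)\prof_{(0)})^{h\T}\simeq\Sigma|Z|\prof_{(0)}$. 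Combined with the left arrow of the factorization, all three terms in the display are naturally identified with $\Sigma|Z|\prof_{(0)}$, so the comparison map is an equivalence by naturality of the norm sequence.

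The main obstacle is really just bookkeeping: verifying that the identifications are natural in $X$ strongly enough both to match the resulting composite with the map stated in the lemma and to transfer the equivalence from $j_*Z$ back to $X$ through the retract. Both points reduce to standard naturality properties---of the norm cofiber sequence in the underlying $\T$-spectrum, of profinite completion and rationalization, and of the various fixed-point and homotopy-orbit constructions---so this is a matter of care rather than of essential difficulty.
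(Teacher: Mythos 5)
Your proof is correct and follows essentially the same route as the paper's: both rest on the profinite equivalence $\colim_n BC_n\to B\T$ to identify $(\holim_F X^{hC_n})\prof_{(0)}$ with $(X^{h\T})\prof_{(0)}$, and both then use the $\T$-norm cofiber sequence together with the vanishing of the $\T$-Tate construction on (almost) free cyclic spectra and the commutation of homotopy orbits with rationalization. The only cosmetic difference is that you first pass through the literal free case $j_*Z$ via the rational retract and compute both sides as $\Sigma|Z|\prof_{(0)}$, whereas the paper stays with the almost free $X$ and compares the rationalized norm squares for $X\prof$ and $X\prof_{(0)}$ directly.
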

\begin{proof}
  Not using anything about free cyclic spectra, we have that both the
  maps $(\holim[F]X^{hC_n})\prof_{(0)}\to
  \left(X^{h\T}\right)\prof_{(0)}\to
  \left((X\prof)^{h\T}\right)_{(0)}$ are weak equivalences.  Since the
  Tate spectrum vanishes for free cyclic spectra we have that both
  the horizontal $\T$-transfers in 
$$
\begin{CD}
  \Sigma\left((X\prof)_{h\T}\right)_{(0)}@>>>\left((X\prof)^{h\T}\right)_{(0)}\\
  @VVV@VVV\\
  \Sigma(X\prof_{(0)})_{h\T}@>>>(X\prof_{(0)})^{h\T}
\end{CD}
$$
are equivalences, and the Lemma follows since the left vertical map is
an equivalence since homotopy orbits commute with rationalization.
\end{proof}

Let us recall some more or less standard notation.  The category of
finite sets of the form $\mathbf n=\{1,\dots,n\}$ and injections is
denoted $\I$.  We write $S^{\mathbf n}$ for $S^1$ smashed with itself
$n$ times (so that $S^{\mathbf 0}=S^0$). Our $\ess$-algebras $A$ are
either $\Gamma$-spaces or connective symmetric spectra, according to
taste, but ultimately give rise to simplicial functors, and it is as
such they are input to the machinery, and so we write $A(S^{\mathbf
  n})$ for the $n$-th level.  In particular, when $A$ is the
Eilenberg-MacLane spectrum of a simplicial ring $R$, $A(S^{\mathbf
  n})=U(R\otimes \tilde\Z[S^{\mathbf n}])$, where $(\tilde\Z,U)$ is the free/forgetful pair between abelian groups and pointed sets.

In this notation, the $q$-simplices of B\"okstedt's $THH(A)$ is the homotopy colimit over $(\bx_0,\dots,\bx_q)\in\I^{q+1}$ of $Map_*(\bigwedge_{i=0}^qS^{\bx_i},\bigwedge_{i=0}^qA(S^{\bx_i}))$, with Hochschild-style cyclic operators.

Let $\A$ be a square arising as the Eilenberg-MacLane spectra of a split square of simplicial rings and let $I(0)=A^{12}$, $I(1)=\ker\{f^1\}$ and $I(2)=\ker\{f^2\}$.  
For $\bx=(x_0,\dots,x_q)\in\I^{q+1}$, let
$$V^{(k)}(\A)(\bx)=\bigvee_{f}\bigwedge_{i=0}^qI(f(i))(S^{\bx_i})$$  
where the wedge runs over the $f\colon\Z/(q+1)\to\Z/3$ such that $|A_f|=k$, where $A_f$ was defined in \ref{Def:Af}. 
Observe that if $\bx\in\I^{q+1}$ and $\bx^n=(\bx,\dots,\bx)\in\I^{n(q+1)}$ is the diagonal, then
$$V^{(k)}(\A)(\bx^n)^{C_n}\cong
\begin{cases}
  V^{(k/n)}(\A)(\bx)&\text{if $k=0\mod n$}\\
  *&\text{otherwise}
\end{cases}.
$$

 In analogy with the cyclic modules $H(k)$ defined in the proof of Lemma \ref{lem:XPA}, let $T{(k)}$ be the cyclic object whose $q$-simplices is the homotopy colimit over $\bx\in\I^{q+1}$ of $Map_*(\bigwedge_{i=0}^qS^{\bx_i},V^{(k)}(\A)(\bx))$.  We get equivalences of cyclic objects
$$\bigvee_{k>0}T(k)\we\ifi THH(\A)\we \prod_{k>0}T(k),
$$
where the infinite wedge and product are weakly equivalent as the connectivity of $T(k)$ goes to infinity with $k$.

For positive integers $n$ and $k$, let $T(n,k)=sd_nT(k)^{C_n}$, and
extend to rational $n$ and $k$ by setting $T(n,k)=*$ if $n$ or $k$ is not integral.

Restriction induces maps $T(n,k)\to T(n/m,k/m)$ which are interesting
only when $m$ divides both $n$ and $k$.
\begin{lemma}
  The homotopy fiber of the restriction map
  $$T(n,k)\to\holim[m>1]
T(n/m,k/m)\cong\holim[1\neq m|\gcd(n,k)]T(n/m,k/m)
  $$
is equivalent to $T(k)_{hC_n}$.  In particular, if $1=\gcd(n,k)$ we have an equivalence $T(k)_{hC_n}\simeq T(n,k)$
\end{lemma}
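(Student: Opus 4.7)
The plan is to realize the fiber sequence as an instance of the fundamental (isotropy-separation) cofibration sequence for the genuine $C_n$-fixed points of $X := |T(k)|$, viewed as a $\T$-spectrum via the cyclic structure. First, the given fixed-point formula $V^{(k)}(\A)(\bx^n)^{C_n}\cong V^{(k/n)}(\A)(\bx)$, by restriction to any subgroup $C_m\subseteq C_n$, extends to $V^{(k)}(\A)(\bx^n)^{C_m}\cong V^{(k/m)}(\A)(\bx^{n/m})$ whenever $m\mid\gcd(n,k)$ (and vanishes otherwise). This endows $X$ with a cyclotomic-type structure: its geometric $C_m$-fixed points are $|T(k/m)|$ when $m\mid k$ and contractible otherwise. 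In particular $T(n,k)\simeq X^{C_n}$, and the restriction maps $T(n,k)\to T(n/m,k/m)$ of the lemma are realised as the standard cyclotomic composites $X^{C_n}\to(X^{\Phi C_m})^{C_{n/m}}\simeq |T(k/m)|^{C_{n/m}}$.

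For each prime $p\mid n$, let $\mathcal{F}_p$ denote the family of subgroups of $C_n$ not containing $C_p$, and apply the isotropy-separation cofibration $(E\mathcal{F}_p)_+\wedge X\to X\to\widetilde{E\mathcal{F}_p}\wedge X$. Taking $C_n$-fixed points and using the cyclotomic identification $(\widetilde{E\mathcal{F}_p}\wedge X)^{C_n}\simeq(X^{\Phi C_p})^{C_{n/p}}\simeq T(n/p,k/p)$ yields, for each prime $p$ separately, a fibre sequence $((E\mathcal{F}_p)_+\wedge X)^{C_n}\to T(n,k)\to T(n/p,k/p)$. Smashing the $\widetilde{E\mathcal{F}_p}$'s together (equivalently, using the family $\mathcal{P}$ of proper subgroups of $C_n$ with the model $\widetilde{E\mathcal{P}}\simeq\bigwedge_{p\mid n}\widetilde{E\mathcal{F}_p}$) and passing to $C_n$-fixed points produces a cubical diagram whose homotopy limit is $\holim[m\mid n,\, m>1](X^{\Phi C_m})^{C_{n/m}}$. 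Since $X^{\Phi C_m}$ is contractible for $m\nmid k$, this coincides with $\holim[m\mid\gcd(n,k),\, m>1]T(n/m,k/m)$. The total fibre of the cube is then identified with $X_{hC_n}=T(k)_{hC_n}$ via the Adams isomorphism, because the ``fully free'' corner is modelled by $(EC_n)_+\wedge X$ with free $C_n$-action.

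The main technical obstacle is this total-fibre identification for $n$ with several prime factors. For a prime power $n=p^a$ a single application suffices: $\mathcal{F}_p=\{1\}\subseteq C_{p^a}$ is the trivial family, so $E\mathcal{F}_p=EC_{p^a}$ is $C_{p^a}$-free and the Adams isomorphism immediately gives $((E\mathcal{F}_p)_+\wedge X)^{C_{p^a}}\simeq X_{hC_{p^a}}$. For general $n$ one iterates the isotropy separation over the primes dividing $n$ and checks by a standard cubical argument that the total fibre of the resulting diagram is $X_{hC_n}$; this is a routine though somewhat involved manipulation in genuine equivariant stable homotopy theory. The ``in particular'' case $\gcd(n,k)=1$ is immediate: the indexing category of the holim is empty and the holim is trivial, so the fibre of $T(n,k)\to\ast$ is $T(n,k)$ itself, giving $T(n,k)\simeq T(k)_{hC_n}$.
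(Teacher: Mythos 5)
The paper's own ``proof'' is a one-line citation to the standard fundamental cofibration sequence argument (DGM VI.1.3.8, BCD 5.2.5), and your proposal fleshes out exactly that argument: the cyclotomic identification $X^{\Phi C_m}\simeq |T(k/m)|$, isotropy separation over the primes dividing $n$, and the Adams isomorphism for the total fibre. So the approach is essentially the same as the paper's cited one, and the overall structure is correct. Two points deserve tightening, though. First, your cube is indexed by squarefree divisors of $n$, whereas the lemma's homotopy limit is over all $m>1$ dividing $\gcd(n,k)$; identifying the two requires noting that the squarefree divisors $>1$ are homotopy initial in the divisor poset, since each slice $\{m'\ \text{squarefree}: 1<m'\mid m\}$ has a terminal object (the radical of $m$). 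Second, the parenthetical ``(equivalently, using the family $\mathcal{P}$ of proper subgroups with $\widetilde{E\mathcal{P}}\simeq\bigwedge_p\widetilde{E\mathcal{F}_p}$)'' is misleading: running isotropy separation once with $\mathcal{P}$ yields only the two-term sequence $\bigl((E\mathcal{P})_+\wedge X\bigr)^{C_n}\to X^{C_n}\to X^{\Phi C_n}$, which is not the statement of the lemma. The full cube is genuinely needed, $\widetilde{E\mathcal{P}}\wedge X$ being merely its terminal vertex; and $(EC_n)_+\wedge X\simeq\bigwedge_p(E\mathcal{F}_p)_+\wedge X$ is the iterated fibre of the cube (computed as a smash of fibres of unit maps, with all non-trivial fixed points vanishing), not a ``corner'' of it, so the phrase ``the fully free corner'' should be replaced by ``the total fibre.''
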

\begin{proof}
  This follows by the standard arguments proving the ``fundamental cofibration
sequence'' for fixed points of topological Hochschild homology, as in 
\cite[VI.1.3.8]{DGM}.  For a published account see \cite[5.2.5]{BCD}, but remove the intricacies which are present in the commutative situation where non-cyclic group actions are allowed.

\end{proof}

Consider the homotopy limit of the fixed points of $\prod_{k>0}T(k)$
under the restriction and Frobenius maps.  By prioritizing the
restriction map, we write this as 
$\left(\holim[R]  \prod_{k>0}T(n,k)\right)^{hF}$.  The homotopy limit
of the restriction maps gives the homotopy limit of the diagram
(extended to infinity in both directions)
$$\xymatrix{T(1,1)&T(2,1)&T(3,1)&T(4,1)&T(5,1)&T(6,1)\\
T(1,2)&T(2,2)\ar[ul]&T(3,2)&T(4,2)\ar[ull]&T(5,2)&T(6,2)\ar[ulll]\\
T(1,3)&T(2,3)&T(3,3)\ar@/^1pc/[uull]&T(4,3)&T(5,3)&T(6,3)\ar@/_1pc/[uullll]\\
T(1,4)&T(2,4)\ar[uul]&T(3,4)&T(4,4)\ar@/_1pc/[uull]&T(5,4)&T(6,4)\ar[uulll]\\
T(1,5)&T(2,5)&T(3,5)&T(4,5)&T(5,5)\ar@/_1.5pc/[uuuullll]&T(6,5)\\
T(1,6)&T(2,6)\ar[uuul]&T(3,6)\ar@/^1pc/[uuuull]&T(4,6)\ar[uuull]&T(5,6)&T(6,6)\ar@/_1.8pc/[uuuullll]\ar@/^1.5pc/[uuulll]}
$$
which, by reversal of priorities, is the same as $\holim[R]
\prod_{n>0}T(n,k)$:
\begin{align*}
  \holim[n]\prod_{k>0}T(n,k)&\cong
\holim[n]\prod_{t\in\Q^*}T(n,tn)\\
&\cong
\holim[k]\prod_{t\in\Q^*}T(k/t,k)\cong\holim[k]\prod_{n>0}T(n,k).
\end{align*}

\begin{lemma}\label{lem:ifiTC}Let $\A$ be the square of $\ess$-algebras associated with a split square.
  Then the map
  \begin{align*}
    \ifi TC(\A)\prof_{(0)}\simeq&
\left(\left(\holim[R]  \prod_{k>0}T(n,k)\right)^{hF}\right)\prof_{(0)}\\
\to&\left(\holim[R] \left(\left(\prod_{n>0}T(n,k)\prof\right)_{(0)}\right)\right)^{hF}
  \end{align*}
is an equivalence.
\end{lemma}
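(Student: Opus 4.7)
The plan is to commute $\prof_{(0)}$ inward past the $hF$-fixed points, past $\holim[R]$, and past the infinite product, exploiting Corollary \ref{cor:retroffree} and Lemma \ref{lem:retroffree} together with connectivity estimates coming from the proof of Lemma \ref{lem:XPA}. The main inputs are the almost free cyclic structure on each $T(k)$ (the spectrum-level analogue of the cyclic map $H(k)\to j_*G(k)$ with composite equal to multiplication by $k$) and the fact that the connectivity of $T(k)$ tends to infinity with $k$, as noted just before the statement of the lemma.

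For the passage past $(-)^{hF}$, the key observation is that $\prod_{k>0}T(k)$ becomes, after profinite completion followed by rationalization, a retract of a free cyclic spectrum. This is exactly the conclusion reached in the last paragraph of the proof of Lemma \ref{lem:XPA}: each $T(k)$ is almost free cyclic via the map $a\mapsto\sum_{r\in A_f}(r,t^{-r}a)$, and the growing connectivity of $T(k)$ lets us replace $\prod_{k>0}$ by $\bigvee_{k>0}$ in any fixed homotopy degree, after which the almost-free structure assembles. Since the $hF$-system models the homotopy $\T$-fixed point construction, Lemma \ref{lem:retroffree} applied to $X=\prod_{k>0}T(k)$ gives the equivalence $\bigl(\holim[F]X^{hC_n}\bigr)\prof_{(0)}\simeq(X\prof_{(0)})^{h\T}$, which is the passage of $\prof_{(0)}$ past the outer $hF$.

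For the passage past $\holim[R]$ and the product I would first invoke the reversal of priorities displayed just before the lemma, so that the inner product is indexed by $n$ and the $\holim[R]$ is over $k$. Because $T(n,k)=*$ unless $n\mid k$, the product $\prod_{n>0}T(n,k)$ is effectively finite for each fixed $k$, and hence $\prof$ and $(-)_{(0)}$ commute past it without further hypothesis. After reindexing, the $\holim[R]$ is a sequential homotopy limit whose terms have connectivity tending to infinity, so in each fixed homotopy degree only finitely many stages contribute, and both $\prof$ and $(-)_{(0)}$ commute with it. The main obstacle is precisely this last commutation of $(-)_{(0)}$ with the infinite $\holim[R]$: without the reorganization of indices, the na\"ive $\holim[n]$ would mix stages across all connectivity ranges and obstruct the passage, but after the reversal of priorities the tower has the required increasing-connectivity property and the commutation goes through.
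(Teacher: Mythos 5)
Your last paragraph — reversal of priorities and the connectivity of $\prod_{n>0}T(n,k)$ growing with $k$ — is exactly the paper's proof, and in fact it is the \emph{whole} proof. The paper disposes of the lemma in three sentences: profinite completion commutes with all homotopy limits, rationalization commutes with a homotopy limit along a tower whose connectivity grows without bound, the category of natural numbers and factorizations has cofinal directed subcategories, and the connectivity of $\prod_{n>0}T(n,k)$ grows with $k$. That single connectivity argument covers the passage of $\prof_{(0)}$ past the entire $\holim[R,F]$, i.e.\ past both $\holim[R]$ and the $(-)^{hF}$, since the latter is just the homotopy limit over the Frobenius part of that category; no further idea is needed.

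The genuine gap is in your first paragraph, where you try to handle $(-)^{hF}$ separately by invoking Lemma~\ref{lem:retroffree}. That lemma is about the comparison $\bigl(\holim[F]X^{hC_n}\bigr)\prof_{(0)}\to (X\prof_{(0)})^{h\T}$, a statement comparing homotopy fixed points of the finite cyclic subgroups to homotopy $\T$-fixed points, and it is deployed in the paper in Corollary~\ref{cor:end} for exactly that comparison. The $(-)^{hF}$ in Lemma~\ref{lem:ifiTC} is not that construction: it is merely notation for the remaining $F$-part of $\holim[R,F]$ over genuine fixed points $T(n,k)=sd_nT(k)^{C_n}$, and it does not ``model the homotopy $\T$-fixed point construction'' in the sense required by Lemma~\ref{lem:retroffree}. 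Your premise, that $\prod_{k>0}T(k)$ is almost free cyclic as a spectrum, is also not established anywhere in the paper: the almost-free structure is proved for the cyclic \emph{abelian group} $H(k)$ in Lemma~\ref{lem:XPA}, and the paper only transfers consequences to $T(k)$ later via the linearization $T(k)\to H(H(k))$ of Lemma~\ref{lem:THHvsHH} and the torsion estimates of Corollary~\ref{cor:tosionorbit}/Lemma~\ref{lem:Rsplit}. Finally, your treatment of the infinite product is unnecessary work: in the target, $\prof_{(0)}$ sits \emph{outside} $\prod_{n>0}$, so no commutation with the product is being asserted (your observation that it is a finite product for each $k$ is correct, just not needed here).
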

\begin{proof}
  If in a tower of spectra the
connectivity of the maps grows to infinity, then the rationalization
of the homotopy limit is equivalent to the homotopy limits of the
rationalized tower.
Since the connectivity of $\prod_{n>0}T(n,k)$ grows to infinity with
$k$ (and the category of natural numbers and factorizations
has cofinal directed subcategories), we have the claimed equivalence.
\end{proof}

\begin{lemma}\label{lem:Rsplit}
  The restriction map 
$$\left(\prod_{n>0}T(n,k)\prof\right)_{(0)}\to \holim[1\neq
l|k]\left(\prod_{n>0}T(n,k/l)\prof\right)_{(0)}$$
is split up to homotopy.
\end{lemma}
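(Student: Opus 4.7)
The idea is to apply $\prod_{n>0}(-)\prof_{(0)}$ to the fiber sequence from the preceding (unlabeled) lemma, then split the resulting projection using the almost-free structure of $T(k)$. Combining that lemma with the reindexing used just above for the homotopy limit (for each divisor $l|k$, the integers $n$ that are multiples of $l$ reparametrize to all positive integers by $n'=n/l$), one obtains the fiber sequence
\[ \prod_{n>0}(T(k)_{hC_n})\prof_{(0)}\longrightarrow \prod_{n>0}T(n,k)\prof_{(0)}\longrightarrow \holim[1\neq l|k]\prod_{n>0}T(n,k/l)\prof_{(0)}. \]
It suffices to split the right-hand map up to homotopy.

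The cyclic spectrum $T(k)$ is almost free: in direct analogy with Lemma~\ref{lem:ratret2} one constructs cyclic maps $T(k)\to j_*Z\to T(k)$ whose composite is multiplication by $k$, for a simplicial spectrum $Z$. For the free cyclic spectrum $j_*Z$, the Tate construction $(j_*Z)^{tC_n}$ vanishes for every $n$; this is the $C_n$-version of Lemma~\ref{lem:Tateoffreevanish} and follows because $|j_*Z|\simeq \T_+\smsh |Z|$ is a free $C_n$-spectrum. Tate vanishing yields, via the $C_n$-norm map (in the spirit of Corollary~\ref{cor:retroffree}), a canonical section of the cyclotomic restriction, so the right-hand map, applied to $j_*Z$ in place of $T(k)$, has an explicit splitting after profinite completion.

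Finally, transport this splitting back along the retract $T(k)\to j_*Z\to T(k)$: since rationalization inverts $k$, multiplication by $k$ is an equivalence on the rationalized profinite completions involved, and the retract structure thus passes to an honest homotopy section of the restriction map for $T(k)$. The main obstacle is checking that the almost-free retract on $T(k)$ survives the (not necessarily homotopy-invariant) functor $sd_n(-)^{C_n}$ compatibly with profinite completion, and that the section constructed in the free case respects the $\holim$ structure indexed by divisors of $k$.
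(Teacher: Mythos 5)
Your overall strategy is close to the paper's: identify the fiber of the restriction with $\prod_{n>0}T(k)_{hC_n}\prof_{(0)}$ via the fundamental cofibration sequence, then use almost-freeness and Tate vanishing to split the resulting fiber sequence after $\prof_{(0)}$. But the specific mechanism you propose -- splitting the restriction map for $j_*Z$ and transporting along the retract $T(k)\to j_*Z\to T(k)$ -- has a genuine gap, which you partially flag yourself. The restriction map $T(n,k)\to T(n/l,k/l)$ is not a homotopy-invariant functor of the cyclic spectrum: it arises from the genuine equivariant structure of the B\"okstedt construction applied to $V^{(k)}(\A)$, via the isomorphism $V^{(k)}(\A)(\bx^n)^{C_n}\cong V^{(k/n)}(\A)(\bx)$. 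A free cyclic spectrum $j_*Z$ carries no such structure; indeed its realization $\T_+\smsh|Z|$ is a free $C_n$-spectrum, so its geometric $C_n$-fixed points vanish and there is no ``cyclotomic restriction'' for it to which one could exhibit a section. So the sentence ``Tate vanishing yields \ldots{} a canonical section of the cyclotomic restriction'' does not make sense for $j_*Z$, and there is nothing to transport.

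The paper sidesteps this by using the cyclotomic structure \emph{only} for the identification of the fiber (your first paragraph is fine, and is the content of the unlabeled lemma), and then working entirely with the homotopy-invariant constructions $(-)_{hC_n}$, $(-)^{hC_n}$, $(-)^{tC_n}$. Concretely, one produces a \emph{retraction} of the fiber inclusion $\prod_n T(k)_{hC_n}\to\prod_n T(n,k)$ by composing with the map $T(n,k)\to T(k)^{hC_n}$ to homotopy fixed points; the resulting composite is the $C_n$-norm, and the goal becomes to show this norm is an equivalence after $\prod_n$ and $\prof_{(0)}$. The paper does this by first linearizing via $T(k)\to H(H(k))$ (an $L$-equivalence on homotopy orbits uniformly in $n$, by Corollary~\ref{cor:tosionorbit}, so an equivalence after $\prod_n$ and $\prof_{(0)}$), and then using that $H(k)$ is almost free cyclic (Lemma~\ref{lem:XPA}), which makes $\prod_n H(H(k))^{tC_n}$ uniformly $k$-torsion and hence trivial after $\prof_{(0)}$. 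If you want to skip the linearization and argue almost-freeness of $T(k)$ directly, you can, but the splitting you obtain is a retraction of the fiber inclusion via the norm map, not a literal section of the genuine restriction transported from the free cyclic object; phrasing it that way removes the obstacle you noticed, because $sd_n(-)^{C_n}$ is no longer applied to $j_*Z$ at all.
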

\begin{proof}
  We have seen that the homotopy fiber of the restriction map may be identified with
  $\left(\prod_{n>0}T(k)_{hC_n}\prof\right)_{(0)}$, and the lemma
  follows once we know that the left and lower arrows in the commutative diagram
$$\xymatrix{\left({\underset{n>0}{\prod}}T(k)_{hC_n}\right)\prof_{(0)}\ar[r]\ar[d]&
\left({\underset{n>0}{\prod}}T(n,k)\right)\prof_{(0)}\ar[r]&
\left({\underset{n>0}{\prod}}T(k)^{hC_n}\right)\prof_{(0)}\ar[d]\\
\left({\underset{n>0}{\prod}}H((k))_{hC_n}\right)\prof_{(0)}\ar[rr]&&
\left({\underset{n>0}{\prod}}H(H(k))^{hC_n}\right)\prof_{(0)}}
$$
are equivalences.  Here the vertical maps are induced by the linearization maps $T(k)\to H(H(k))$ where $H(k)$ is the cyclic module introduced in the proof of the split
part of Lemma \ref{lem:XPA}. Exactly as in Corollary \ref{cor:tosionorbit} there is a function $L\colon\Z\to\Z_+$ such that $T(k)_{hC}\to H(H(k))_{hC}$ is an $L$-equivalence, and so the infinite product $\left(\prod_{n>0}T(n,k)\right)\to \left(\prod_{n>0}H((k))_{hC_n}\right)$ is also an $L$-equivalence, which shows that the left map in the displayed diagram is an equivalence. The lower map is an equivalence, since the cofiber is $\left(\prod_{n>0}H(H(k))^{tC_n}\right)\prof_{(0)}$, and each Tate homology is $k$-torsion. 
\end{proof}

\begin{cor}\label{lem:identifyingTR}
  The map $$\holim[R] \left(\prod_{n>0}T(n,k)\prof\right)_{(0)}\to \prod_k\left(\prod_{n}H(H(k))^{hC_n}\prof\right)_{(0)}$$
is an equivalence.
 On the right hand side the action by the Frobenius is represented by the product of the maps
 $F\colon H((k))^{hC_{nm}}\to H(H(k))^{hC_m}$ associated to $C_m\subseteq C_{nm}$.
\end{cor}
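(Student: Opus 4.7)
The plan is to reduce the $R$-homotopy limit to a product indexed by $k$ via the splitting of Lemma \ref{lem:Rsplit}, and then transport the resulting fibers across the linearization map to reach the stated product of $C_n$-homotopy fixed points.

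First I would observe that the $R$-indexing category is the poset of positive integers under divisibility (via the restriction maps $T(n,k)\to T(n/l,k/l)$), so the $R$-homotopy limit of $\left(\prod_n T(n,k)\prof\right)_{(0)}$ is built out of a tower indexed by $k$. For each $k$, Lemma \ref{lem:Rsplit} gives a homotopy splitting of the restriction $\left(\prod_n T(n,k)\prof\right)_{(0)}\to\holim[1\neq l|k]\left(\prod_n T(n,k/l)\prof\right)_{(0)}$, so by induction on the number of prime factors of $k$ the homotopy limit decomposes as $\prod_k$ of the homotopy fibers of these restrictions; the (unlabeled) lemma preceding Lemma \ref{lem:Rsplit} identifies each such fiber with $\left(\prod_{n>0}T(k)_{hC_n}\prof\right)_{(0)}$.

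Second I would linearize and convert homotopy orbits to homotopy fixed points. The linearization $T(k)\to H(H(k))$ is a torsion equivalence, parallel to Lemma \ref{lem:THHvsHH}, and Corollary \ref{cor:tosionorbit} upgrades this to an equivalence $\left(\prod_n T(k)_{hC_n}\right)\prof_{(0)}\simeq\left(\prod_n H(H(k))_{hC_n}\right)\prof_{(0)}$. Since $H(k)$ is almost free cyclic (proof of Lemma \ref{lem:XPA}), multiplication by a nonzero integer on $H(H(k))$ factors through the free cyclic spectrum $H(j_*G(k))$, whose geometric realization is equivalent to $\T_+\smsh|H(G(k))|$ and thus carries a free $\T$-action, hence a free $C_n$-action for every $n$; therefore $H(H(k))^{tC_n}$ is torsion and vanishes after rationalization, and the $C_n$-norm cofibration $X_{hC_n}\to X^{hC_n}\to X^{tC_n}$ gives $H(H(k))_{hC_n}\prof_{(0)}\simeq H(H(k))^{hC_n}\prof_{(0)}$. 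Assembling over $k$ and $n$, and noting that the Frobenius $F\colon T(nm,k)\to T(m,k)$ (inclusion of $C_m$-fixed points in $C_{nm}$-fixed points) is preserved by all these $\T$-equivariant identifications, yields both the claimed equivalence and the stated Frobenius action.

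The hard part will be ensuring that the iterated homotopy splittings of Lemma \ref{lem:Rsplit} assemble coherently as $k$ varies, so that the $R$-homotopy limit really becomes a product rather than something more twisted; some care is needed because the splittings exist only up to homotopy, and one must check compatibility with the restriction structure maps connecting different values of $k$.
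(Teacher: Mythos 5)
Your proposal matches the paper's proof: both split the $R$-tower via Lemma \ref{lem:Rsplit}, identify each stage's fiber with $\left(\prod_{n}T(k)_{hC_n}\prof\right)_{(0)}$, and pass from homotopy orbits to homotopy fixed points using the uniform $k$-torsion bound on the $C_n$-Tate spectra of $H(H(k))$ (your almost-free-cyclic argument is precisely the content the paper invokes from the proof of Lemma \ref{lem:Rsplit}). The coherence concern you flag at the end is also left implicit in the paper's phrase ``the homotopy limit over $R$ just adds successively new factors''; it is the standard fact that the homotopy limit of a tower of split fibrations of spectra is the product of the successive fibers, applied along a cofinal tower in the divisibility poset.
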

\begin{proof}
Lemma \ref{lem:Rsplit} gives that the restriction maps split, and so there is an equivalence between $\left(\prod_{n>0}T(n,k)\prof\right)_{(0)}$ and the product of the fibers up to that stage.  We saw in the proof of Lemma \ref{lem:Rsplit} that the map from the fiber $\left({\underset{n>0}{\prod}}T(k)_{hC_n}\right)\prof_{(0)}$ to $\left({\underset{n>0}{\prod}}H(H(k))^{hC_n}\right)\prof_{(0)}$ is a weak equivalence.  Hence the map 
  \begin{align*}
    \left(\prod_{n>0}T(n,k)\prof\right)_{(0)}\to
&\prod_{d|k}\left(\prod_{n}H(H(k/d))^{hC_{n/d}}\prof\right)_{(0)}\\=
&\left(\prod_{n}\prod_{d|\gcd(k,n)}H(H(k/d))^{hC_{n/d}}\prof\right)_{(0)}
  \end{align*}
is a weak equivalence, and the homotopy limit over $R$ just adds successively new factors.
\end{proof}

\begin{cor}\label{cor:end}
All maps in the commuting diagram
$$\xymatrix{{\ifi TC(\A)\prof_{(0)}}\ar[r]\ar[d]&
{\ifi (THH(\A)\prof_{(0)})^{h\T}}\ar[d]\\
{\prod_k\left(\left(\prod_n H(H(k))^{hC_n}\right)^{hF}\right)\prof_{(0)}}\ar[d]&
{\ifi (H(HH(\A))\prof_{(0)})^{h\T}\ar[d]}\\
{\prod_k\left(\left({\holim[F]}H(H(k))^{hC_n}\right)\prof_{(0)}\right)}\ar[r]&
{\prod_k\left(H(H(k))\prof_{(0)}\right)^{h\T}}
}
$$
are equivalences.
\end{cor}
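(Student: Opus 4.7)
The plan is to verify that each of the six arrows in the diagram is an equivalence, drawing on the lemmas accumulated in the preceding subsection; commutativity then forces the top horizontal arrow to be an equivalence as well.

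For the left column, the upper vertical arrow is supplied by Lemma \ref{lem:ifiTC} combined with Corollary \ref{lem:identifyingTR}: the first identifies $\ifi TC(\A)\prof_{(0)}$ with the $F$-homotopy fixed points of $\holim[R]\bigl(\prod_n T(n,k)\prof\bigr)_{(0)}$, and the second identifies the latter with $\prod_k \bigl(\prod_n H(H(k))^{hC_n}\prof\bigr)_{(0)}$ compatibly with the Frobenius action. The lower vertical arrow on the left is then the exchange of $\holim[F]$ and $\prod_k$, valid because the connectivity of $H(H(k))$ grows with $k$ and so the product is, up to any fixed connectivity range, a finite product on which homotopy limits commute. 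For the bottom horizontal arrow, each cyclic abelian group $H(k)$ (in the notation from the proof of Lemma \ref{lem:XPA}) is almost free, hence so is the cyclic Eilenberg-MacLane spectrum $H(H(k))$; Lemma \ref{lem:retroffree} applied factorwise then yields the equivalence $(\holim[F] H(H(k))^{hC_n})\prof_{(0)} \we (H(H(k))\prof_{(0)})^{h\T}$, which assembles into the stated bottom horizontal equivalence after taking the product over $k$.

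For the right column, Lemma \ref{lem:THHvsHH} gives a natural equivalence $THH(\A)\prof_{(0)} \simeq H(HH(\A))\prof_{(0)}$, and since $\ifi(-)$ and $(-)^{h\T}$ both preserve equivalences this produces the upper-right vertical equivalence. For the lower-right vertical equivalence, recall from the proof of Lemma \ref{lem:XPA} that $\ifi HH(\A) \cong \bigoplus_{k>0} H(k)$, and this wedge agrees with the product because the connectivity of $H(k)$ grows with $k$; profinite completion, rationalization, and $(-)^{h\T}$ all respect such connectivity-controlled products, so the middle-right entry is identified with $\prod_{k>0}(H(H(k))\prof_{(0)})^{h\T}$. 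The main obstacle is the bookkeeping required to verify that the Frobenius action coming out of Corollary \ref{lem:identifyingTR} matches the Frobenius used in forming $(-)^{hF}$ at the middle-left entry, and that the various identifications are compatible with the natural $\T$-action used on the right; once these compatibilities are in hand, the two squares of the diagram commute by naturality and the top horizontal arrow is forced to be an equivalence.
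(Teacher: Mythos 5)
Your proposal follows the paper's own strategy: verify each of the five non-top arrows and deduce the top horizontal one from commutativity. The citations you use (Lemma \ref{lem:ifiTC} together with Corollary \ref{lem:identifyingTR} for the upper-left, Lemma \ref{lem:THHvsHH} for the upper-right, the $H(k)$-decomposition of $\ifi HH(\A)$ for the lower-right, Lemma \ref{lem:retroffree} together with almost-freeness of $H(k)$ for the bottom horizontal) are the ones the paper invokes, and you correctly flag the compatibility of the Frobenius actions as the bookkeeping that makes the diagram commute.

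One small inaccuracy: you describe the lower-left vertical arrow as an exchange of $\holim[F]$ and $\prod_k$ justified by connectivity, but the two entries differ only in the replacement of $\bigl(\prod_n H(H(k))^{hC_n}\bigr)^{hF}$ by $\holim[F] H(H(k))^{hC_n}$ inside the outer $\prod_k$, which is untouched. The paper identifies this correctly as just rewriting the homotopy limit over the directed $F$-system as the $F$-homotopy fixed points of the product; no connectivity estimate is needed for that step, nor is any exchange of $\prod_k$ with a homotopy limit involved. This is a mis-description rather than a gap, since the arrow is still an equivalence for essentially formal reasons.
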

\begin{proof}
The upper left vertical map is an equivalence by the definition of $TC$, Lemma \ref{lem:ifiTC}, Lemma \ref{lem:identifyingTR} and Corollary \ref{lem:identifyingTR}.  
The lower left vertical map is simply rewriting the homotopy limit of a directed system as homotopy fixed points of a product.  
The upper right vertical map  is an equivalence by Lemma \ref{lem:THHvsHH}.
The right lower vertical map is the decomposition of the Hochschild homology of a split square.
The horizontal lower map is an equivalence by Lemma \ref{lem:retroffree} since $H(k)$ is almost free cyclic.
\end{proof}
\begin{proof}[Proof of Theorem \ref{thm:main}]
  As observed in Section \ref{sec:outline}, Theorem \ref{thm:main}
  follows from Lemma \ref{lem:main}, which claims that the cube
  $TC(\A)_{(0)}\to\left(THH(\A)_{(0)}\right)^{h\T}$ is homotopy
  cartesian. 
Lemma \ref{lemma:main} reduces the problem to showing that the cube $TC(\A)\prof_{(0)}\to\left(THH(\A)\prof_{(0)}\right)^{h\T}$ is homotopy
  cartesian. 

Recall from \cite{D97} that we may resolve connective $\ess$-algebras
by simplicial rings.  More precisely, if $A$ is an $\ess$-algebra,
$U\tilde\Z A$ is the $\ess$-algebra obtained by applying the free/forgetful pair
$(\tilde\Z,U)$.  This gives rise to a cosimplicial resolution $A\to\{[q]\to
(U\tilde\Z)^{q+1}A\}$, and the connectivity of
$A\to\holim[q<r](U\tilde\Z)^{q+1}A$ goes to infinity with $r$.  

For our
purposes, it is important to note that if $\A$ is a homotopy
cartesian square, then its underlying cube of spectra is homotopy
cocartesian, and so the cube of ``spectrum homologies'' $U\tilde\Z\A$ is again
homotopy cartesian.  If the maps in $\A$ are $0$-connected, then so
are the maps in $U\tilde\Z\A$.

Furthermore, $U\tilde\Z A$ is naturally equivalent to the
Eilenberg-MacLane spectrum $H(R_A)$, where
$R_A$ is a simplicial ring, and so if $\A$ is a homotopy cartesian
square of $\ess$-algebras, then $R_{\A}$ is a homotopy cartesian
diagram of simplicial rings.

Now, exactly the same set of arguments used in \cite{D97} to reduce the profinite
Goodwillie conjecture to McCarthy's theorem \cite{McC97}, can now be used to see that it is enough to
prove
Lemma \ref{lem:main} in the case where $\A$ the result of applying the
Eilenberg-MacLane functor to a homotopy cartesian square of simplicial
rings and $0$-connected maps.  

By the reduction performed in the proof of Lemma \ref{lem:XPA} it is
enough to consider squares $\A$ associated with split squares of
simplicial rings, and we assume in the rest of the proof that $\A$ has
this form (although all the results used could be generalized to the
more general case using the reductions above).

In this special case the cube $TC(\A)\prof_{(0)}\to\left(THH(\A)\prof_{(0)}\right)^{h\T}$ is homotopy cartesian by Corollary \ref{cor:end}.
\end{proof}


\bibliography{bint}{}
\bibliographystyle{plain}

\bibliographystyle{plain}

\end{document}